\documentclass[9pt,epsfig]{amsart}
\usepackage{amssymb, adjustbox, enumerate, amsbsy, stmaryrd}
\usepackage[dvipsnames]{xcolor}
\usepackage{amsmath,wasysym}
\usepackage{comment}
\usepackage{caption}
\usepackage[mathscr]{eucal}
\usepackage{amsthm}
\usepackage{graphicx}
\usepackage {amscd}
\usepackage {epic}
\usepackage[all,color]{xy}
\usepackage[alphabetic]{amsrefs}
\usepackage{color}

\usepackage{graphpap, color}

\usepackage[mathscr]{eucal}
\usepackage{mathrsfs}%,mathabx}
\usepackage{graphicx}

\usepackage{tikz}

\oddsidemargin = 31pt
\topmargin = 20pt
\headheight = 0pt
\headsep = 25pt
\textheight = 610pt
\textwidth = 402pt
\marginparsep = 10pt
\marginparwidth = 23pt
\footskip = 30pt

\newtheorem{prop}{Proposition}[section]

\newtheorem{assu}[prop]{Assumption}

\newtheorem{coro}[prop]{Corollary}
\newtheorem{defi}[prop]{Definition}

\newtheorem{exam}[prop]{Example}

\newtheorem{lemm}[prop]{Lemma}
\newtheorem{pf-thm}[prop]{proof of theorem}
\newtheorem{rema}[prop]{Remark}

\newtheorem{theo}[prop]{Theorem}
\newtheorem{ques}[prop]{Question}
\newtheorem*{ack}{Acknowledgments}

%---------mathcal--------------

\def\cX{\mathcal{X}}

%---------mathscr---------------
\def\sO{{\mathscr O}}

\def\sL{{\mathscr L}}

\def\sO{\mathscr{O}}

%----------mathbb----------------

\newcommand{\CC}{\mathbb{C}}

\newcommand{\NN}{\mathbb{N}}
\newcommand{\PP}{\mathbb{P}}
\newcommand{\QQ}{\mathbb{Q}}
\newcommand{\RR}{\mathbb{R}}
\newcommand{\ZZ}{\mathbb{Z}}

%----------mathbf-------------------

\newcommand{\bi}{\mathbf{i}}

%-----------mathfrak---------------
\newcommand{\fc}{\mathfrak{c}}

\newcommand{\fg}{\mathfrak{g}}

\newcommand{\fk}{\mathfrak{k}}

\newcommand{\ft}{\mathfrak{t}}

%---------mathrm----------------------
\def\aut{\mathrm{Aut}}

\def\chow{\mathrm{Chow}}

\def\conv{\mathrm{Conv}}

\def\gr{\mathrm{graph}}

\def\dd{\mathrm{partial}}

\def\SL{\mathrm{SL}}

\def\sym{\mathrm{Sym}}

\def\vol{\mathrm{vol}}

%-----operation---------------
\def\Ad{\mathrm{Ad}}

\def\la{\langle}
\def\ra{\rangle}

\def\dd{\partial}

%--------color---------------------------
\newcommand{\red}{\color{red}}

%-------xiaowei greek---------------
\newcommand{\lam}{\lambda}

\newcommand{\al}{\alpha}

\newcommand{\de}{\delta}
\newcommand{\De}{\Delta}

\newcommand{\si}{\sigma}

%-------------xiaowei math symbol---------------

\newcommand{\tri}{\triangle}
%-------------xiaowei pounctual---------------

%-----------------

%%%%%%%%%%%%%%%%%

%%%%%%%%%%%%%%%%%%%

%\input{diagrams.sty}
\input{xy}
\xyoption{all}

\begin{document}

\title{Asymptotic Chow stability of toric Del Pezzo surfaces}

\author{King-Leung Lee}
\address{Department of Mathematics and Computer Science\\
           Rutgers University, Newark NJ 07102-1222\\ USA}
\email{kl465@rutgers.edu}

\author{Zhiyuan Li}
\address{Shanghai Center for Mathematical Science\\ Fudan University, 220 Handan
Road, Shanghai, 200433 \\China}
\email{zhiyuan.li@fudan.edu.cn}

\author{Jacob Sturm}
\address{Department of Mathematics and Computer Science\\
           Rutgers University, Newark NJ 07102-1222\\ USA}
\email{sturm@rutgers.edu}

\author{Xiaowei Wang}
\address{Department of Mathematics and Computer Science\\
           Rutgers University, Newark NJ 07102-1222\\ USA}
\email{xiaowwan@rutgers.edu}

\date{\today}
\maketitle
\begin{abstract}
In this short note, we study the asymptotic Chow  polystability of toric Del Pezzo surfaces appear in the moduli space of K\"ahler-Einstein Fano varieties constructed in \cite{OSS}.
%As a consequence for Del Pezzo surface of degree $3$, we identify the locus of K-polystable, GIT stable (as a hypersurface) and K-polystable and asymptotic Chow polystable locus among all cubic surfaces.
\end{abstract}

\section{Introduction}
Since the invention of geometric invariant theory
\cite{MFK} by David Mumford, GIT has been successfully applied to the construction of various kinds of moduli
spaces, e.g. moduli spaces of stable vector bundles over a projective curves and of moduli spaces of polarized varieties $(X,L)$.
% by taking the geometric invariant theory quotient of the Chow (or Hilbert) semistable points has become one of its most important applications.
In particular, when $X$ is
a canonically polarized manifold, it was shown by Mumford and Gieseker in dimension
1, by Gieseker \cite{Gie77} in dimension 2, and in arbitrary dimensions by Donaldson \cite{Don01}  (making use of the work of  Aubin , Yau \cite{Aub76, Yau} and Zhang \cite{ Zha 96})
that $(X,L=\sO_X(K_X))$  is asymptotically Chow stable (see also \cite{PS2004}). That
is, given a smooth canonically polarized variety $(X,\sO_X(K_X))$,  that there exists
an $r_0$ such that $(X,\sO_X(rK_X))$
is Chow stable for any $r\geq r_0$. More generally, if $(X,L)$ is a polarized manifold, GIT also plays a role in the existence of constant scalar curvature K\"ahler metrics in the class of $L$ (see for example the survey article \cite{PS2009}). 
\vskip .1in

In order to compactify the moduli space it is necessary to include {\em singular} varieties (e.g. by stable reduction theorem for curves). In general, it is quite difficult to extend above works to singular varieties, even for the $\dim=1$ case (cf.\cite{LiW15, Gie77}).
%By applying the GIT machinery (see [MFK]), we indeed get a proper moduli space compactifying the moduli space of
On the other hand, it was shown in \cite{WX14}, that {\em asymptotic} Chow stability does not form a proper moduli space in general by exhibiting some explicit punctured families of {\em canonical polarized} varieties without  asymptotic  Chow semi-stable filling.  However, in \cite{LWX2014}, a proper moduli space of smoothable K-semistable {\em Fano varieties} is constructed. It is a natural to ask  whether or not the moduli space of $\QQ$-Fano varieties can be realized as asymptotic GIT moduli space at least when the  dimension is {\em small}. 
\footnote{We remark that Ono, Sano and Yotsutani succeeded in constructing a $\dim=7$ toric Fano K\"ahler-Einstein manifold that is not asymptotic Chow stable in \cite{OSN12}. But that did not rule out the asymptotic GIT completely, see Remark \ref{osn} for more explanation.}  To answer this question, one needs  to  understand first when $\dim=2$, in particular those Fano varieties appear in the moduli spaces of K-semistable Del Pezzo surfaces constructed \cite{OSS}. For smooth K\"ahler-Einstein Fano manifolds, by  Mabuchi's extension \cite{Mab2004} of Donaldson's work \cite{Don01} we know that they are all asymptotic Chow polystable provided their automorphism groups are semi-simple. 
Unfortunately,  it seems quite difficult to extend Donaldson and Mabuchi's approach in \cite{Don01, Mab2004} to singular Fano varieties, at least to the best of our knowledge so far,  there is {\em not a single non-smooth} example of $\QQ$-Fano varieties whose asymptotic Chow stability is {\em known}. In this note we want to close this gap by studying the asymptotic Chow stability of some singular toric Del Pezzo surfaces. {The original motivation was the following question which was asked of us by Odaka and Laza}.

\begin{ques}\label{Q}
Is the K-polystable cubic surface $X:=\{xyz=w^3\}\subset \PP^3$  asymptotically Chow stable?
\end{ques}

To state our main result, let
\begin{enumerate}\label{X}
\item $(X_1=\PP^2/(\ZZ/9\ZZ),\sO_{X_1}(1):=\sO_{X_1}(-3K_{X_1}))\subset (\PP^6,\sO_{\PP^6}(1))$ with the $\ZZ/9\ZZ=\la\xi=\exp2\pi\sqrt{-1}/9\ra$-action generated by  $\xi\cdot [z_0, z_1,z_2]=[z_0,\xi,z_1,\xi^{-1},z_2]$.
\item $(X_2=\PP^1\times\PP^1/(\ZZ/4\ZZ),\sO_{X_2}(1):=\sO_{X_2}(-2K_{X_2}))\subset (\PP^6,\sO_{\PP^6}(1))$ with the $\ZZ/4\ZZ=\la\xi\ra$-action generated by  $\xi\cdot ([z_1,z_2],[w_1,w_2])=([\sqrt{-1} z_1,z_2],[-\sqrt{-1} w_1,w_2])$.
\item $(X_3=\{xyz=w^3\},\sO_{X_3}(1):=\sO_{X_1}(-K_{X_1}))\subset(\PP^3,\sO_{\PP^3}(1))$;
\item $(X_4=Q_1\cap Q_2,\sO_{X_4}(1):=\sO_{X_2}(-K_{X_2}))\subset (\PP^4,\sO_{\PP^4}(1))$ with
$$\displaystyle
 \left\{ \begin{array}{llll}
         Q_1: &z_0z_1+z_2z_3+z_4^2 &=0& \\
        Q_2: & \lam z_0z_1+\mu z_2z_3+z_4^2&=0 & \lam\ne \mu
        \end{array} \right.
,$$

\end{enumerate}
These are the {\em only}  $\QQ$-Gorenstein smoothable toric  K\"ahler-Einstein (i.e. K-polystable)  Del Pezzo surfaces of $\deg=1,2,3,4$ thanks to the work of \cite[Theorem 2.3.3]{Spotti12}. In particular, they are  parametrized in the proper moduli spaces  constructed in \cite[Theorem 4.1, 4.2, 4.3, 5.13, 5.28]{OSS}. Then our main result is the following:
\begin{theo}\label{main}
Let  $(X_i,\sO_{X_i}(k))$ be one in the list \eqref{X}. Then $(X_i,\sO_{X_i}(k))$ is
\begin{enumerate}
\item Chow unstable  for any $k\geq 1$ when $i=1$;
\item Chow polystable for $k\geq 2$ when $i=2$;
\item Chow polystable for $k\geq 1$ when $ i=3,4$.
\end{enumerate}
\end{theo}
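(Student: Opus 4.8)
\emph{Strategy.} All four surfaces are toric, so I would attack Theorem \ref{main} through the combinatorial GIT of toric varieties. Each $(X_i,\sO_{X_i}(1))$ determines a lattice polytope $P_i\subset M_{i,\RR}$ in the relevant character lattice $M_i$, the polarization $\sO_{X_i}(k)$ corresponds to $kP_i$, and the embedding uses the lattice points $kP_i\cap M_i$, giving $X_i\hookrightarrow\PP^{N_k}$ with $N_k+1=\#(kP_i\cap M_i)$. The plan is to identify the Chow weight of a one-parameter subgroup $\beta=(\beta_a)_{a\in kP_i\cap M_i}$ of the diagonal maximal torus $\widehat T\subset\SL(N_k+1)$ with the combinatorial functional attached to the regular subdivision of $kP_i$ induced by $\beta$. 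Two features make this tractable. First, because each $X_i$ is K\"ahler--Einstein, its moment polytope is reflexive with barycenter $b_{P_i}$ at the origin, so the \emph{linear} part of the Chow weight reduces to the discrete lattice-point sum
\[
S_i(k):=\sum_{a\in kP_i\cap M_i}a .
\]
Second, since the Chow cycle of a toric variety is invariant under the acting torus $T_N$, Kempf's theory lets one choose an optimal destabilising subgroup inside a maximal torus containing $T_N$, i.e. inside $\widehat T$, so it suffices to test $\widehat T$.

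\emph{Case analysis.} With this in hand the proof splits into an easy (instability) direction and a harder (polystability) direction, and the entire content is the bookkeeping of $kP_i\cap M_i$ in the \emph{correct} lattice --- this is where the finite quotients $\ZZ/9$ and $\ZZ/4$ enter and where the $k$-dependence is created. For $i=1$ I would write $P_1$ as the anticanonical triangle of $\PP^2$ taken in the index-$9$ sublattice fixed by $\xi\cdot[z_0,z_1,z_2]=[z_0,\xi z_1,\xi^{-1}z_2]$ and check that $S_1(k)\neq 0$ for every $k\ge 1$; the nonzero vector $S_1(k)$ is exactly the weight datum of a linear $\beta$ with negative Chow weight, proving Chow instability for all $k$. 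For $i=3$, $X_3=\{xyz=w^3\}$ carries the extra symmetry $\fS_3$ permuting $x,y,z$, together with the diagonal $\ZZ/3$; since $M_3$ is the standard two-dimensional $\fS_3$-representation with no trivial summand, the only invariant vector is $0$, forcing $S_3(k)=0$ for all $k$. The analogous symmetry bookkeeping for the polytope of $X_4=Q_1\cap Q_2$ gives $S_4(k)=0$ for all $k$. For $i=2$ the $\ZZ/4$-quotient of the square breaks the symmetry just enough that $S_2(1)\neq 0$ while $S_2(k)=0$ for all $k\ge 2$, which is the source of the threshold for $i=2$.

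\emph{Polystability and the main obstacle.} Vanishing of $S_i(k)$ only kills the linear (Futaki-type) part of the Chow weight, so the crux is to upgrade it to genuine polystability: one must show that the full convex, piecewise-linear Chow-weight functional $\beta\mapsto w_{\mathrm{Ch}}(\beta)$ on $\widehat T$ is nonnegative and vanishes \emph{only} along $\mathrm{Lie}(T_N)$, i.e. the automorphism directions. My plan is to extract this from the reflexivity of $P_i$ together with the secondary-polytope (GKZ) description of the $\widehat T$-weights of the Chow form, showing that for $b_{P_i}=0$ the balancing point is forced into the relative interior and its boundary locus is cut out precisely by the $T_N$-translations, so that $S_i(k)=0$ becomes not merely necessary but sufficient. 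I expect the genuinely delicate points to be twofold: (a) carrying out the lattice-point counts $kP_i\cap M_i$ correctly in the quotient lattices $M_i$, since a single misplaced boundary point changes $S_i(k)$ and hence the conclusion; and (b) ruling out \emph{non}-torus destabilisers once $S_i(k)=0$, for which I would invoke the reductivity of $\aut(X_i,\sO_{X_i}(k))$ (the surfaces being K-polystable) together with the Kempf reduction to $\widehat T$ above. Finally, to exclude the semistable-but-not-polystable case, I would use that these $X_i$ are limits of K\"ahler--Einstein metrics: a balanced embedding exists, and Kempf--Ness identifies its existence with Chow polystability, closing the argument in cases $i=2,3,4$.
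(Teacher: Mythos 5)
Your overall framework---reduce to the diagonal maximal torus $\widehat T\subset\SL(N_k+1)$ via Kempf's theory and then test combinatorially---is the same as the paper's (its Theorem 2.2 plus Ono's criterion), and your treatment of $X_1$ is exactly the paper's argument: the continuous barycenter of $\tri_1$ vanishes while the discrete lattice-point sum $S_1(k)\neq 0$ for all $k\geq 1$, violating the necessary condition for Chow semistability. However, the proposal has two genuine gaps. The first is computational: your explanation of the $k\geq 2$ threshold for $X_2$ is wrong. The polytope $\tri_2=\conv\{(\pm 2,0),(0,\pm 1)\}$ is centrally symmetric (the group $W_2=\ZZ/2\ZZ\times\ZZ/2\ZZ$ contains $-\mathrm{id}$), so $S_2(k)=\sum_{a\in k\tri_2\cap\ZZ^2}a=0$ for \emph{every} $k\geq 1$, including $k=1$. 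The threshold cannot be detected by the linear invariant; in the paper it arises from the convexity estimates of Section 4, where certain coefficients $\widetilde{\de}_k-\eta$ are shown nonnegative only for $k$ large (improved to $k\geq 2$ by sharpening one inequality).

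The second gap is structural and fatal to cases $i=2,3,4$: the sufficiency step is not proved. Vanishing of $S_i(k)$ only kills the linear (Futaki-type) part of the Chow weight; by Ono's criterion one must show, for every concave piecewise-linear $g$ on $k\tri$ (equivalently every one-parameter subgroup of $\widehat T$), that
\[
\frac{1}{\vol(k\tri)}\int_{k\tri}g-\frac{1}{\chi_\tri(k)}\sum_{x\in k\tri\cap\ZZ^2}g(x)\;\geq\;0,
\]
with equality only for affine $g$. This is the entire content of Sections 3 and 4 of the paper (trapezoid estimates over explicit Weyl-equivariant triangulations), and neither of your proposed substitutes can deliver it. The claim that reflexivity plus the GKZ secondary-polytope picture forces nonnegativity once $S_i(k)=0$ is not an argument, and it is false as a principle: $X_1$ itself is K\"ahler--Einstein with continuous barycenter at the origin, yet is Chow unstable. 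The closing argument ``KE limit $\Rightarrow$ balanced embedding exists $\Rightarrow$ Chow polystable'' is also false for varieties with continuous automorphisms: Donaldson's and Mabuchi's theorems require discrete, respectively semisimple, automorphism groups (and smoothness), which fail here since $\aut(X_i)$ contains the torus; the paper's cited Ono--Sano--Yotsutani example is a smooth toric KE Fano that is asymptotically Chow unstable; and, decisively, your argument would apply verbatim to the K-polystable $X_1$ and prove it Chow polystable, contradicting part (1) of the very theorem you are proving.
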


Our paper is organized as follows: in section two we review some basic facts of GIT, in particular, we reduce the checking of stability to a purely combinatorial problem thanks to the fact that the $X_i$ are toric. In section three, we will carry out the main estimate that is needed for the proof of the last case of  Theorem \ref{main}. In section four we  extend  the main estimate used in section three and prove the second cases of Theorem \ref{main}. It turns out this is the most delicate calculation. In the last section, we establish the first case by showing the non-vanishing of  Chow weight of the torus action. We want to remark that examples of  asymptotic Chow unstable Fano toric K\"ahler-Einstein manifolds were first found in \cite{OSY}.

\begin{ack}The second and last author want to express their gratitute to AIM, San Jose for the excellent research environment they provided. The project is originally started during both the second and last authors were participating the AIM workshop on {\em Stability and moduli spaces} in January, 2017. In particular, we want to thank Radu Laza for proposing  Question \ref{Q} explicitly. We also thank Cristiano Spotti for informing us Theorem 2.3.3 in his thesis.  The last author wants to thank IHES for the fantastic research environment, where part of the writing was carried out. The work of the last author was partially supported by  a Collaboration Grants for Mathematicians from Simons Foundation:281299 and NSF:DMS-1609335.
\end{ack}

\section{Basics on GIT and symplectic quotient}
In this section we include a symplectic quotient proof of Kempf's instability result \cite[Corollary 4.5]{Kemp1978}, which reduces checking of Chow stability of a projective variety to a smaller group provided the variety admits a large symmetry group.

\subsection{Kempf's instability theory}
Let $G$ be a reductive algebraic group acting on a polarized pair  $(Z,\sO_{Z}(1))$, i.e. $\sO_Z(1)$ is $G$-linearzed. Let $K<G$ be a {\em maximal compact subgroup}. Fixing a $K$-invariant Hermitian metric with a positive curvature form $\omega$ on $L$, we obtain a holomorphic Hamiltonian $K$-action on $(Z,\omega)$ with moment map
$$\mu_K:Z\longrightarrow \fk.$$
Let $z\in Z$ be a point with stabilizer $G_z<G$.
\begin{defi}
We say a $G$-orbit $G\cdot z\in Z$ is  {\em $G$-extremal} with respect to the $G$-action on $(Z,\sO_{Z}(1))$ if and only if there is a maximal compact subgroup $K<G$ and a $h\in G$ as above such that $\mu_K(h\cdot z)\in \fk_{h\cdot z}$, the stabilizer of $h\cdot z$ in {$\fk$}. 
\footnote{Notice that, if one translate the K\"ahler form $\omega$ on $Z$ by a $h\in G$ then  the above definition can be reformulated as following: for any {\em prefixed} maximal compact $K<G$ there exists a $h\in G$ such that $\mu_K(h\cdot z)\in K_{h\cdot z}$. }
This is equivalent to saying that $h\cdot z$ is a critical point of
$$|\mu_K|_\fk^2=\la\mu_K,\mu_K\ra_\fk :Z\longrightarrow \RR$$
 where $\la\cdot,\cdot\ra_\fk$ is a $K$-invariant inner product on $\fk$. 
% \footnote{In fact, if $z$ is by transporting the symplectic form $\omega$  via a $h\in G$,  every $z'\in G\cdot z$}
 We say $z$ is {\em $G$-polystable} if there is a  maximal compact subgroup $K<G$ such that $\mu_\fk(z)=0$.
\end{defi}

Now we are ready to give a simple and symplectic quotient proof of a slight improvement of Kempf's instability Theorem  \cite[Corollary 4.5]{Kemp1978}.
\begin{theo}\label{kempf}
Let $G_0<G_z$ be a {\em reductive} subgroup. Then $G\cdot z$ is an {\em $G$-extremal  (resp. poly-stable)}  if and only if  $C(G_0)\cdot z$ is  {\em $C(G_0)$-extremal (resp.  poly-stable)}  with respect to the $C(G_0)$-action, induced by the embedding $i:C(G_0)\hookrightarrow G$ on  $(Z,\sO_{Z}(1))$, where $C(G_0)<G$ is the {\em centralizer} of the $G_0$ in $G$.
\end{theo}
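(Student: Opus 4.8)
The plan is to exploit the $K$-equivariance of the moment map together with the negative gradient flow of $|\mu_K|_\fk^2$, after choosing the compact groups compatibly. First I would fix a maximal compact subgroup $K_0<G_0$; since $K_0$ is compact it lies in some maximal compact $K<G$, and because $K_0$ is maximal in the reductive group $G_0$ one has $K_0=K\cap G_0$. Set $L:=C_K(K_0)$, the centralizer of $K_0$ in $K$. By the structure theory of reductive groups $L$ is a maximal compact subgroup of $C(G_0)$ and $\mathfrak{l}:=\mathrm{Lie}(L)=\fk^{K_0}$, the $\Ad(K_0)$-fixed subspace of $\fk$. Taking $\la\cdot,\cdot\ra_\fk$ to restrict to $\mathfrak{l}$, the moment map for the induced $L$-action on $(Z,\omega)$ is $\mu_L=\pi_\mathfrak{l}\circ\mu_K$, where $\pi_\mathfrak{l}\colon\fk\to\mathfrak{l}$ is the orthogonal projection. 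Throughout I use the reformulation of the footnote: with $K$ (hence $\omega$) fixed, $G$-polystability of $z$ means $\mu_K(h\cdot z)=0$ for some $h\in G$, and $G$-extremality means $\mu_K(h\cdot z)\in\fk_{h\cdot z}$ for some $h\in G$.

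The key observation is an equivariance lemma. If $w\in Z$ is fixed by $K_0$, then $\Ad_{k_0}\mu_K(w)=\mu_K(k_0\cdot w)=\mu_K(w)$ for all $k_0\in K_0$, so $\mu_K(w)\in\fk^{K_0}=\mathfrak{l}$ and hence $\mu_L(w)=\mu_K(w)$. Since $\mu_K(w)$ already lies in $\mathfrak{l}$, the identity $\fk_w\cap\mathfrak{l}=\mathfrak{l}_w$ shows that $\mu_K(w)\in\fk_w$ if and only if $\mu_L(w)\in\mathfrak{l}_w$; that is, $w$ is a critical point of $|\mu_K|_\fk^2$ exactly when it is one of $|\mu_L|_\mathfrak{l}^2$. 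This applies to every $w=h\cdot z$ with $h\in C(G_0)$, because $h$ commutes with $K_0$ and $K_0$ fixes $z$, so $K_0$ fixes $h\cdot z$; equivalently $C(G_0)\cdot z\subset Z^{G_0}$. The backward implications are then immediate: any $h\in C(G_0)$ realizing $\mu_L(h\cdot z)=0$ (polystable) or $\mu_L(h\cdot z)\in\mathfrak{l}_{h\cdot z}$ (extremal) realizes the same for $G$, since $\mu_K(h\cdot z)=\mu_L(h\cdot z)$ and $C(G_0)<G$.

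For the forward implications I would run the negative gradient flow of $|\mu_K|_\fk^2$, which up to a positive factor is $\dot z_t=-JX_{\mu_K(z_t)}(z_t)$; its velocity is the fundamental vector field of $\sqrt{-1}\,\mu_K(z_t)\in\sqrt{-1}\fk\subset\fg$, so the trajectory stays in the single orbit $G\cdot z$. Starting from $z_0=z\in Z^{G_0}$, the equivariance lemma gives $\mu_K(z_t)\in\mathfrak{l}$ as long as $z_t\in Z^{G_0}$, whence $\sqrt{-1}\,\mu_K(z_t)\in\sqrt{-1}\,\mathfrak{l}\subset\mathrm{Lie}(C(G_0))$; since $C(G_0)$ preserves $Z^{G_0}$, uniqueness of ODE solutions confines the whole trajectory to $C(G_0)\cdot z\subset Z^{G_0}$, where it coincides with the negative gradient flow of $|\mu_L|_\mathfrak{l}^2$ for the $C(G_0)$-action. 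By the convergence theory for the moment-map square (Kempf--Ness, Ness, Kirwan) this common trajectory converges to a critical point $z_\infty\in\overline{G\cdot z}\cap Z^{G_0}$, and $G\cdot z$ contains a critical point (is $G$-extremal) precisely when $z_\infty\in G\cdot z$, with the analogous statement for $C(G_0)$; the polystable case is the sub-case $\mu_K(z_\infty)=0$.

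The main obstacle is the forward direction, namely upgrading ``$z_\infty\in G\cdot z$'' to ``$z_\infty\in C(G_0)\cdot z$.'' Since the whole trajectory already lies in $C(G_0)\cdot z$ and $z_\infty$ is the common limit, the point is to show that if the limit is attained inside the ambient orbit then it is attained inside the confined orbit. I would settle this through the Kempf--Ness/Kirwan picture: a critical point in an orbit is the minimum of the geodesically convex Kempf--Ness functional, first for the $C(G_0)$-action on the symmetric space $C(G_0)/L$, and then compared with $G/K$ using that $z\in Z^{G_0}$ forces the relevant minimizing geodesics to lie in the totally geodesic sub-symmetric space $C(G_0)/L\hookrightarrow G/K$. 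Convexity then guarantees that the $G$-minimizer, being $G_0$-fixed, already occurs along the $C(G_0)$-directions, so $z_\infty\in C(G_0)\cdot z$; the polystable case (minimum value $0$) is identical. This convexity-and-confinement step is the crux, and it is exactly where the reductivity of $G_0$, hence of $C(G_0)$, enters.
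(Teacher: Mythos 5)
Your equivariance lemma, the backward implication, and the confinement of the negative gradient trajectory to $C(G_0)\cdot z$ are all correct; the backward half coincides with the first half of the paper's own proof (equivariance forces $\mu_K=\mu_{K_H}$ on $C(G_0)\cdot z$, so criticality transfers). The genuine gap is in the forward direction, and it is twofold. First, the equivalence you assert, ``$G\cdot z$ contains a critical point (is $G$-extremal) precisely when $z_\infty\in G\cdot z$,'' is not supplied by the convergence theory you cite: convergence only places $z_\infty$ in $\overline{G\cdot z}$, and the implication ``the orbit contains a critical point $\Rightarrow$ the flow limit lies in the orbit'' is of essentially the same depth as the theorem being proved (it requires the $G$-invariance of the Kirwan strata together with a Ness-type uniqueness statement, neither of which you invoke). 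Second, the convexity argument you offer to close the crux rests on a premise that is false in exactly the extremal case: a critical point of $|\mu_K|_\fk^2$ with $\mu_K\neq 0$ is \emph{never} a minimum of the Kempf--Ness functional. Indeed, on any non-semistable orbit the Kempf--Ness functional is geodesically convex with \emph{no} critical points at all (a critical point of a convex function would be a minimum, forcing semistability), yet such orbits can perfectly well be extremal: the simplest instance is a point fixed by all of $G$ with $\mu_K\neq 0$, which is a critical point of $|\mu_K|_\fk^2$ (every fundamental vector field vanishes there) while its Kempf--Ness functional is affine and unbounded below. So ``a critical point in an orbit is the minimum of the geodesically convex Kempf--Ness functional'' fails on precisely the class of orbits the extremal statement is about, and the totally-geodesic comparison built on it collapses there.

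Even in the polystable case, where the Kempf--Ness functional is the right object, your step ``the $G$-minimizer, being $G_0$-fixed, already occurs along the $C(G_0)$-directions'' is a non sequitur: a minimizer is a coset in $K\backslash G$, and nothing makes it fixed under the right $K_0$-action. What is true, and what would repair this half of your argument, is that the minimum set is a nonempty, closed, geodesically convex subset of $K\backslash G$ invariant under the compact group $K_0$ (since $K_0<G_z$ acts on any lift $\hat z$ through a unitary character and hence preserves the functional); Cartan's fixed point theorem then produces a $K_0$-fixed minimizer, and the Cartan decomposition identifies the $K_0$-fixed locus of $K\backslash G$ with $K\exp\bigl(\sqrt{-1}\,\fk_H\bigr)$, so that this minimizer is represented by an element of $C(G_0)$, as required. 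For the extremal case no such repair through the Kempf--Ness functional is available. The paper instead quotes Ness's Theorem 6.2 --- a critical point of $|\mu_K|_\fk^2$ minimizes $\|\mu_K\|$ along its whole $G$-orbit --- and combines it with the same equivariance fact you proved, so that the critical point also minimizes $\|\mu_{K_H}\|$ along $C(G_0)\cdot z$ and is therefore $K_H$-critical; that single input replaces your entire flow-plus-convexity scheme and treats the extremal and polystable statements uniformly. Without it (or an equivalent substitute) your forward direction does not close.
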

\begin{proof}
%Let 
%$$H:=C(G_0)=\{g\in G \mid \mathrm{Ad}_gh=h,\forall h\in G_0\}<G,$$
%which is reductive as $G_0$ being reductive.
%\footnote{https://mathoverflow.net/questions/114243/is-the-normalizer-of-a-reductive-subgroup-reductive}
%Let  $K<G $ be a maximal compact subgroup satisfying $K\cap H=K_H<H$ is a {\em maximal compact} subgroup of $H$. 
%
%Now suppose $z$ is  $H$-extremal then there is a $h\in H$
%$$
%\mu_{K_H}(h\cdot z)=i^\ast \mu_{K}(h\cdot z)\in \fk_H\cap \fk_{h\cdot z}, (\text{\em resp.}=0 \text{ if $z$ is $H$-polystable})
%$$
%where $ i^\ast:\fk\rightarrow \fk_H$, induced from $i:\fk_H\rightarrow \fk$, is the {\em orthogonal projection} with respect to a $\Ad_K$-invariant inner product $\la\cdot,\cdot\ra_\fg$ on $\fg$.  Since $h\in H=C(G_0)$, we have $\Ad_h G_0=G_0$.  Without loss of generality we may assume that $h=e$, the identity (i.e. replace $h\cdot z$ by $z$ in the beginning).
%Then the orthogonal projection with respect to $\la\cdot,\cdot \ra_\fg$
%\begin{equation}\label{perp}
%i^\ast\mu_K(z)\in \fk_H\cap \fk_z=(\fk_H)_z, (\text{\em resp. }\mu_K(z)\perp \fk_H \text{ if $z$ is $H$-polystable}).
%\end{equation}
%On the other hand, for any $k\in K_{H}<H<G_z$ we have
%$$
%\mu_K(z)=\mu_K(k\cdot z)=\Ad_k\mu_K(z),
%$$
%from which we deduce that  $\mu_K(z)\in \fc(K_{0,z})=\fk_H$. This combined with \eqref{perp} imply that
%$$
%\mu_K(z)\in \fk_z,(\text{\em resp.}=0 \text{ if $z$ is $H$-polystable})
%$$ i.e. $z$ is $G$-extremal ({\em resp.} $G$-polystable).

Let us  fix a maximal compact subgroup  $K<G$ such that $(K_{0})^{\CC}=G_0$ with $K_{0}:=K\cap G_0$. We define
$$K_H:=C(K_{0})=\{g\in K \mid \mathrm{Ad}_gh=h,\forall h\in K_{0}\}<K,$$
the {\em centralizer} of $K_{0}$ in $K$ and  $H:=K_H^\CC$.
%Then we have  $H<C(G_x)$.
%\footnote{Notice that this conclusion is a consequence of  the fact that $(K_x)^\CC=G_x$.}
Suppose $H\cdot z$ is  $H$-extremal then there is a $h\in H$
$$
\mu_{K_H}(h\cdot z)=i^\ast \mu_{K}(h\cdot z)\in \fk_H\cap \fk_{h\cdot z}, (\text{\em resp.}=0 \text{ if $z$ is $H$-polystable})
$$
where $ i^\ast:\fk\rightarrow \fk_H$ be the {\em orthogonal projection} with respect to a $\Ad_K$-invariant inner product $\la\cdot,\cdot\ra_\fg$ on $\fg$.

Since $h\in H=C(G_0)$, we have $\Ad_h G_0=G_0<G_{h\cdot z}$.  
Without loss of generality we may assume that $h=e$, the identity (i.e. replace $h\cdot z$ by $z$ from  the beginning).
Then \begin{equation}\label{perp}
i^\ast\mu_K(z)\in \fk_H\cap \fk_z, (\text{\em resp. }\mu_K(z)\perp \fk_H \text{ if $z$ is $H$-polystable}).
\end{equation}
On the other hand, for any $k\in K_{0}<G_0<G_z$ we have
$$
\mu_K(z)=\mu_K(k\cdot z)=\Ad_k\mu_K(z),
$$
from which we deduce that  $\mu_K(z)\in \fc(K_{0})=\fk_H$. This combined with \eqref{perp} implies that
$$
\mu_K(z)\in \fk_z,(\text{\em resp.}=0 \text{ if $z$ is $H$-polystable})
$$ i.e. $z$ is $G$-extremal ({\em resp.} $G$-polystable).

Conversely, suppose $G\cdot z$ is extremal. Then   we have 
$\|\mu_K(z)\|=\min_{G\cdot z}\|\mu_K\|$
by \cite[Theorem 6.2]{Ness84} and  $\mu_K(z)\in {\ \fc(K_z)}\subset\fk_H$ (, where $\fc(K_z)$ is the Lie algebra of the centralizer of $C(K_z)<K$) by \cite[Theorem 10]{Wang04}, from which we conclude
$$\|\mu_K(z)\|=\min_{G\cdot z}\|\mu_K\|=\min_{H\cdot z}\|\mu_{K_H}\|=\|\mu_{K_H}(z)\|.$$
Thus $C(G_0)\cdot z$ is extremal and our proof is completed.
\end{proof}

\begin{coro}Let us continue with the notation in the Theorem \ref{kempf}. Then $z\in Z$ is $G$-semistable if and only if $z$ is $C(G_0)$-semistable.
%That is, to test the $G$-semistabiliy of $z$ we only need to test with 1-PS $\lam$'s satisfying $[\lam,G_0]=1$.
\end{coro}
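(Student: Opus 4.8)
The plan is to reduce the statement to the Kempf--Ness characterisation of semistability and then to reuse, almost verbatim, the moment-map computation already carried out in the proof of Theorem~\ref{kempf}. I keep the notation $K_0=K\cap G_0$, $K_H=C(K_0)$, $H=K_H^\CC=C(G_0)$ from that proof. Recall that, by Kempf--Ness theory \cite[Theorem 6.2]{Ness84}, a point is semistable exactly when the infimum of the moment-map norm over its orbit vanishes:
\begin{equation}\label{KN}
z\ \text{is}\ G\text{-semistable}\iff \inf_{g\in G}\|\mu_K(g\cdot z)\|=0,
\end{equation}
and analogously $z$ is $C(G_0)$-semistable iff $\inf_{h\in H}\|\mu_{K_H}(h\cdot z)\|=0$.

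One implication is purely formal. Since $C(G_0)<G$ and the linearisation restricts, every one-parameter subgroup of $C(G_0)$ is a one-parameter subgroup of $G$ carrying the same Hilbert--Mumford weight; hence if no one-parameter subgroup of $G$ destabilises $z$, then none in $C(G_0)$ does either, so $G$-semistability implies $C(G_0)$-semistability. The content of the corollary is the reverse implication.

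For that I would establish the key equality that already powers Theorem~\ref{kempf}: the two moment maps agree along the $H$-orbit. Fix $h\in H=C(G_0)$. Because $h$ centralises $G_0$ and $G_0<G_z$, the point $h\cdot z$ is again fixed by $G_0$, so $K_0<K_{h\cdot z}$; $K$-equivariance of $\mu_K$ then gives $\mu_K(h\cdot z)=\Ad_k\mu_K(h\cdot z)$ for every $k\in K_0$, whence $\mu_K(h\cdot z)\in\fc(K_0)=\fk_H$. As the orthogonal projection $i^\ast$ is the identity on $\fk_H$, I obtain
\begin{equation}\label{equal}
\|\mu_{K_H}(h\cdot z)\|=\|i^\ast\mu_K(h\cdot z)\|=\|\mu_K(h\cdot z)\|\qquad\text{for all } h\in H.
\end{equation}
Assuming $z$ is $C(G_0)$-semistable, \eqref{equal} turns $\inf_{h\in H}\|\mu_{K_H}(h\cdot z)\|=0$ into $\inf_{h\in H}\|\mu_K(h\cdot z)\|=0$; since $H\cdot z\subseteq G\cdot z$, the infimum over the larger orbit is no larger, so $\inf_{g\in G}\|\mu_K(g\cdot z)\|=0$, and \eqref{KN} yields $G$-semistability.

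The only genuine input, and the step I expect to require the most care, is the Kempf--Ness equivalence \eqref{KN} itself: in contrast to the polystable case a zero of $\mu_K$ need not be \emph{attained} on the orbit, only approached in the limit along a degenerating sequence, so one must phrase everything as an infimum over the orbit rather than a minimum over its closure. Once \eqref{KN} is granted, the remaining work is exactly the invariance observation \eqref{equal} extracted from the proof of Theorem~\ref{kempf}, and no further instability theory is needed.
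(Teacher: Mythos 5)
Your proof is correct, and it takes a genuinely different route from the paper's. The paper deduces the corollary directly from Theorem \ref{kempf}: since $z$ is $H$-semistable ($H=C(G_0)$), its orbit closure $\overline{H\cdot z}$ contains an $H$-polystable point $z_0$; Theorem \ref{kempf} promotes $z_0$ to a $G$-polystable point, and since $z_0\in\overline{G\cdot z}$ this certifies $G$-semistability of $z$. You never invoke the statement of Theorem \ref{kempf}; instead you re-run the equivariance computation from inside its proof along the entire $H$-orbit (namely $\mu_K(h\cdot z)\in\fc(K_0)=\fk_H$, hence $\|\mu_{K_H}(h\cdot z)\|=\|\mu_K(h\cdot z)\|$ for all $h\in H$) and conclude through the Kempf--Ness description of semistability as $\inf_{G\cdot z}\|\mu_K\|=0$. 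The trade-off: the paper's argument is shorter given the theorem, but it relies on a polystable representative in the orbit closure (an \emph{attained} zero of $\mu_K$) together with the converse implication that such a representative forces semistability of $z$ --- both facts from Kempf--Ness/Kirwan theory; your argument works only with infima, so attainment never arises, and you correctly isolate the infimum characterization of semistability as the sole nontrivial input, which is indeed standard and can be sourced from \cite{Ness84}, the same reference the paper already leans on. You also supply the easy direction ($G$-semistable $\Rightarrow$ $C(G_0)$-semistable) via Hilbert--Mumford, which the paper leaves implicit; it also follows even more cheaply from the observation that $G$-invariant sections are in particular $C(G_0)$-invariant.
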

\begin{proof}
By our assumption $z$ is $H$-semistable with $H=C(G_0)$, so there is a
$$z_0\in \overline{H\cdot z}\subset \overline{G\cdot z}\subset Z$$
such that $z_0$ is $H$-polystable. By Theorem \ref{kempf}, we know $z_0$ is $G$-polystable and our proof is completed.
\end{proof}
%\begin{rema}
%Notice that the ampleness of $\sO_Z(1)$ has been used since we have evoked the Kempf-Ness-Kirwan's equivalence between GIT and K\"ahler quotient.
%\end{rema}
\bigskip

\subsection{Toric varieties}
Let $\triangle\subset \RR^n$ be any {\em convex polytope} and we will introduce {\em cone}  $\mathrm{PL}(\tri;k)$  in $C^0(k\tri,\RR)$, the space of continuous functions on $k\tri$. To begin with, let $\phi:k\tri\cap\ZZ^n\to \RR$ be any function and   define:
$$\gr_\phi:=\conv \left\{ \bigcup_{x\in k\tri\cap\ZZ^n} \{\left.(x,t)\in \RR^n\times \RR \right| t\leq \phi(x)\}\right\} $$
the {\em convex hull} of the set $\bigcup_{x\in k\tri\cap\ZZ^n} \{\left.(x,t)\in \RR^n\times \RR \right| t\leq \phi(x)\}$.
\begin{defi} Let  $\triangle\subset \RR^n$ be any convex polytope. We define 
\begin{enumerate}
\item A function $C^0(k\tri,\RR)\ni f_\phi :k\tri\to \RR$ is said to be {\em associated} to a  $\phi:k\tri\cap\ZZ^n\to \RR$ if 
$$
f_\phi(x):=\max\{t\mid (x,t)\in \gr_\phi\}:k\tri\longrightarrow \RR.
$$
\item We define the {\em cone} 
\begin{equation}\label{pl}
\mathrm{PL}(\tri;k):=\{f_\phi \ \left|\ \phi:k\tri\cap\ZZ^n\to \RR\right. \}\subset C^0(k\tri,\RR).
\end{equation}
%is the set collecting all such $f_\phi$ obtained from any $\phi:k\tri\cap\ZZ^n\to \RR$. 
\end{enumerate}
\end{defi}

Now to apply Theorem \ref{kempf} to our situation, let $(
X_{\triangle },L_{\triangle }) $ be any polarized toric variety ({\em not necessarily smooth}) with moment polytope $\triangle$. 
Let $\aut(X_\tri)$ denote the {\em automorphism} of the pair $(X_\tri,L_\tri)$, then $T=(\CC^\times)^{n}<\aut(X_\tri)$ is a maximal torus.  
\begin{defi}
Let $(X_\tri,L_\tri)$ be a polarized toric variety with moment polytope $\tri$, we define 
the  {\em Weyl group} $W_\tri:=N(T)/T$ with
$$T=(\CC^\times)^{n}<N(T):=\{g\in \aut(X_\tri, L_\tri)\mid g\cdot T\cdot g^{-1}=T\}<\aut(X_\tri)$$
 being  the  {\em normalizer } of $T<\aut(X_\tri)$.
Clearly, $W_\tri$ acts on $\tri\subset \RR^n\cong \ft$ via the adjoint action. 
\end{defi}
Consider a projective embedding 
$$
(X_\tri, L_\tri^k)\longrightarrow (\PP^{N},\sO_{\PP^N}(1)) 
$$
with  
$$N+1=\chi_\tri(k)=\dim H^0(X_\tri, L^k_\tri)=|k\tri\cap\ZZ^n| \text{ and }\deg X_\tri=d.$$
Let 
$$
\chow_k(X_\tri)\!\!:=\!\!\left\{\left.(H_0,\cdots, H_{n})\in ((\PP^N)^\vee)^{n+1}\right| H_0\cap\cdots\cap H_n\cap X\ne\varnothing\right\}
\in \PP^{d,n;N}\!\!\!:=\PP(\sym^d(\CC^{N+1})^{\otimes (n+1)})
$$
denote the  {\em $k$-th Chow form} associated to the embedding above.  
With those notation understood, we state a result due to H. Ono  \cite{Ono2013}.
\begin{theo}[Theorem 1.1, \cite{Ono2013}]\label{ono}
Let $(X_{\triangle },L_{\triangle })$ be a polarized toric variety (not necessarily smooth) with moment polytope $\triangle \subset \mathbb{R}^{n}$. For a fixed positive integer $k$, $\chow_k(X_\tri)$ of \
$(X_{\triangle },L_{\triangle }^{k})\subset (\PP^N,\sO_{\PP^N}(1)) $ is  {\em polystable} with respect to the action of the {\em subgroup of diagonal matrices}  in $\SL(N+1)$ 
if and only if%
\begin{equation}\label{ch-wt}
\frac{1}{\vol(
k\triangle ) }\int_{k\triangle }g-\frac{1}{\chi_{\triangle
}( k) }\sum_{x\in \triangle \cap \ZZ%
 ^{n} }g( x) \geq 0,
\end{equation}
for any $g\in \mathrm{PL}(\triangle ;k)$ with equality  if and only if $g$ being affine.
%where $\mathrm{PL}(\triangle ;k)$ denotes the 
%Here $\chi_\triangle(k):=|k\triangle\cap \ZZ^n|$.
\end{theo}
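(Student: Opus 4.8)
The statement is the Kempf--Mumford combinatorics of a torus action, so the plan is to run the Hilbert--Mumford criterion for the maximal diagonal torus $T' < \SL(N+1)$ and translate every weight computation into the polytope $\tri$. First I would recall the weight-polytope picture of torus GIT: writing the Chow point $\chow_k(X_\tri) = \sum_{\chi} R_\chi$ in its $T'$-weight decomposition inside $\PP(\sym^d(\CC^{N+1})^{\otimes(n+1)})$, and setting $P := \conv\{\chi : R_\chi \neq 0\}$ to be its weight polytope, one has that $\chow_k(X_\tri)$ is semistable (resp.\ polystable) for $T'$ if and only if $0 \in P$ (resp.\ $0$ lies in the relative interior of $P$). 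A one-parameter subgroup of $T'$ is the same datum as an integer weight vector $a = (a_x)$ indexed by the lattice points, which, after fixing the standard monomial basis of $H^0(X_\tri, L_\tri^k)$ dual to $k\tri\cap\ZZ^n$, is exactly a function $\phi : k\tri\cap\ZZ^n \to \RR$; thus the support function $a \mapsto \max_{\chi\in P}\la a,\chi\ra$ is the quantity whose sign we must control.

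The heart of the matter is the identity expressing the Mumford weight of $\chow_k(X_\tri)$ along the subgroup $\lambda_\phi$ attached to $\phi$ as the difference
\[
\mu(\chow_k(X_\tri), \lambda_\phi) = C\Big( \frac{1}{\vol(k\tri)}\int_{k\tri} f_\phi\,dx - \frac{1}{\chi_\tri(k)}\sum_{x\in k\tri\cap\ZZ^n} f_\phi(x)\Big)
\]
for a fixed positive constant $C=C(d,n)$, where $f_\phi\in\mathrm{PL}(\tri;k)$ is the concave envelope attached to $\phi$ (the lattice sum running over $k\tri\cap\ZZ^n$, consistent with $\chi_\tri(k)=|k\tri\cap\ZZ^n|$). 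I would establish this by analyzing the degeneration $\lambda_\phi(t)\cdot X_\tri$ as $t\to 0$: the flat limit is the (generally non-reduced) toric scheme supported on the polyhedral subdivision of $k\tri$ induced by the heights $\phi$, and the Chow weight is read off from the extremal-weight monomial of its Chow form. This is precisely where the \emph{concave envelope} enters --- the extremal weight picks out the upper faces of $\gr_\phi$, so the arbitrary height function $\phi$ is automatically replaced by $f_\phi$ --- and where the integral $\int_{k\tri} f_\phi$ appears, as the continuous (volume) contribution of the degenerate cycle, against the lattice sum coming from the frame weight needed to normalize the action into $\SL(N+1)$. Since $\mathrm{PL}(\tri;k)$ consists precisely of such envelopes, it suffices to test the inequality on $g=f_\phi$.

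Granting the weight formula, the theorem follows by matching the two GIT conditions to the two clauses of the inequality. Semistability, $0\in P$, is equivalent to $\mu(\chow_k(X_\tri),\lambda_\phi)\geq 0$ for every $\phi$, i.e.\ to \eqref{ch-wt} for all $g\in\mathrm{PL}(\tri;k)$. For polystability one must identify the equality locus: $\mu(\chow_k(X_\tri),\lambda_\phi)=0$ should force $\lambda_\phi$ to fix the Chow point, i.e.\ to lie in the torus $T=(\CC^\times)^n$ of $X_\tri$ itself. I would show that the kernel of the functional consists exactly of the affine $g$: an affine function corresponds to a one-parameter subgroup inside $T$, acting on $X_\tri$ by a genuine automorphism and hence fixing $\chow_k(X_\tri)$, for which the two averages coincide by a direct sampling identity; conversely any non-affine concave $g$ strictly separates the volume-average from the lattice-average. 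This is precisely the condition that $0$ lie in the relative interior of $P$, completing the equivalence with polystability.

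The step I expect to be genuinely delicate is the weight identity of the second paragraph: establishing rigorously that the Mumford weight of the toric Chow form along $\lambda_\phi$ is governed by the concave envelope $f_\phi$ rather than by $\phi$ itself, and pinning the continuous term as $\int_{k\tri} f_\phi$, requires either the Gelfand--Kapranov--Zelevinsky description of the Chow (secondary) polytope of $X_\tri$ or a careful analysis of the flat limit together with its multiplicities. The surrounding steps --- the weight-polytope reformulation and the elementary convexity behind the equality case --- are formal by comparison.
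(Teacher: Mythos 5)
Your route is genuinely different from the paper's: you are reconstructing Ono's original argument, running the Hilbert--Mumford criterion for the diagonal torus directly on the Chow point and identifying the Mumford weight through the Gelfand--Kapranov--Zelevinsky secondary-polytope description of $\chow_k(X_\tri)$. The paper deliberately avoids GKZ: it realizes each $g\in \mathrm{PL}(\tri;k)$ as the moment polytope $\tri_g$ of a $T$-equivariant toric test configuration, quotes Donaldson's computation $w_m=\chi_{\tri_g}(m)-\chi_{\tri}(m)$ for the induced weight, and extracts the Chow weight as the normalized leading coefficient of $w_m-m\chi_\tri(m)w_1/\chi_\tri(1)$ following Ross--Thomas. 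Deferring your key weight identity to GKZ (or to a careful flat-limit analysis) is legitimate, and you correctly isolate it as the crux; note only that the identity should carry $\sum_x \phi(x)$, not $\sum_x f_\phi(x)$, in the normalization term --- the envelope enters only through the integral --- a discrepancy which is harmless for the inequality (since $f_\phi\geq \phi$) but matters in the equality analysis below.

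The genuine gap is in your equality case. You assert that an affine $g$ gives a one-parameter subgroup inside $T$, hence fixes $\chow_k(X_\tri)$, ``for which the two averages coincide by a direct sampling identity.'' That implication is false: a one-parameter subgroup can fix the Chow point in $\PP^{d,n;N}$ and still act on the Chow line with nonzero normalized weight, and for affine $g=\la a,x\ra+c$ that weight is exactly $\bigl\la a,\ \frac{1}{\vol(k\tri)}\int_{k\tri}x\,dx-\frac{1}{\chi_\tri(k)}\sum_{x\in k\tri\cap\ZZ^n}x\bigr\ra$, i.e.\ the barycenter condition \eqref{bary} of Corollary \ref{co-ono}. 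This is a nontrivial necessary condition for semistability and it fails for the paper's own $\tri_1$: by Lemma \ref{m-int} the barycenter of $k\tri_1$ is the origin, while the lattice-point average is $4(-k,-k)/(9k^2+3k+2)\neq 0$, even though every torus one-parameter subgroup fixes $\chow_k(X_1)$ --- this is precisely how Section 5 proves $X_1$ is Chow \emph{unstable}. The companion claim that ``any non-affine concave $g$ strictly separates the volume-average from the lattice-average'' is likewise not an unconditional fact. These two claims together \emph{are} the criterion being characterized, not lemmas available to prove it; as written, your argument would show that every polarized toric variety is Chow polystable for the diagonal torus, contradicting the instability of $X_1$. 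The repair is standard but must be made: test the pair $\lambda_\phi,\lambda_{-\phi}$. Under polystability both weights are nonnegative and their sum is proportional to $\int_{k\tri}(f_\phi+f_{-\phi})$; since $f_\phi+f_{-\phi}$ is concave and nonnegative on $k\tri\cap\ZZ^n$ it is nonnegative on all of $k\tri$, so both weights vanish and vanishing of the integral forces $f_\phi=-f_{-\phi}$, hence $f_\phi$ affine; conversely the two clauses of \eqref{ch-wt} together give that $0$ lies in the relative interior of the weight polytope. Neither clause can be established separately, as your sketch attempts.
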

Now let $(Z,\sO_Z (1))=(\PP^{d,n;N},\sO_{\PP^{d,n;N}}(1))$, $G=\SL(N+1)$ and 
$G_0=N(T) <G_{\chow_k(X_\tri)}=\aut(X_\tri)$. Then the centralizer $C(G_0)<\SL(N+1)$ is contained in a {\em maximal torus} (e.g.  the subgroup of diagonal matrices) of $\SL(N+1)$. In particular, 
Theorem \ref{ono} together with Theorem \ref{kempf} then imply the following 
\begin{coro}\label{W-inv}
Let $(X_\tri,L_\tri)$ be a polarized toric variety with moment polytope $\tri$ as above and $W=W_\tri$ be the Weyl group. Then for  any $k\in \NN$, $(
X_{\triangle },L_{\triangle }^{k}) $ is  Chow polystable (i.e. $\chow_k(X_\tri)\in \PP^{d,n;N}$ is GIT polystable with respect to the $\SL(N+1)$-action on $(\PP^{d,n;N},\sO_{\PP^{d,n;N}}(1))$)
if and only if%
%\begin{equation}\label{cw}
%\frac{1}{\vol(
%k\triangle ) }\int_{k\triangle }g-\frac{1}{\chi_{\triangle
%}( k) }\sum_{x\in k\triangle \cap \ZZ%
% ^{n} }g(x) \geq 0,
%\end{equation}
\eqref{ch-wt} holds for any
$$
g\in \mathrm{PL}(
\triangle ;k)^W=\{g\in \mathrm{PL}(
\triangle ;k)\mid g(w\cdot x)=g(x)\  \forall w\in W  \},$$
with equality if and only if $g$ being affine.
\end{coro}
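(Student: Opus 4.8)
The plan is to obtain the Corollary by feeding $G_0=N(T)$ into Theorem~\ref{kempf} and then matching the resulting subtorus against the combinatorial criterion of Theorem~\ref{ono}. First I would check that $G_0=N(T)$ is a reductive subgroup of the stabilizer $G_{\chow_k(X_\tri)}=\aut(X_\tri)$: it sits in an extension $1\to T\to N(T)\to W_\tri\to 1$ whose identity component is the torus $T$ and whose component group $W_\tri$ is finite, so $N(T)$ is reductive. Applying Theorem~\ref{kempf} with $G=\SL(N+1)$, $z=\chow_k(X_\tri)$ and this $G_0$ then yields that $\chow_k(X_\tri)$ is $\SL(N+1)$-polystable, i.e.\ Chow polystable, if and only if it is $C(N(T))$-polystable.

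Next I would make the centralizer explicit. Since $T$ acts on $H^0(X_\tri,L_\tri^k)=\bigoplus_{x\in k\tri\cap\ZZ^n}\CC_x$ with pairwise distinct weights, any element of $\SL(N+1)$ commuting with $T$ is diagonal in this weight basis, and commuting in addition with coset representatives of $W_\tri$ forces the diagonal entries to be constant along the $W_\tri$-orbits in $k\tri\cap\ZZ^n$. Hence $C(N(T))$ is exactly the $W_\tri$-invariant subtorus of the diagonal maximal torus $\Delta<\SL(N+1)$ (in agreement with the remark preceding the statement that $C(G_0)\subset\Delta$). Its integral cocharacters are therefore the $W_\tri$-invariant weight assignments $\phi\colon k\tri\cap\ZZ^n\to\ZZ$, and passing to concave envelopes $\phi\mapsto f_\phi$ identifies them with the integral elements of the subcone $\mathrm{PL}(\tri;k)^{W_\tri}\subset\mathrm{PL}(\tri;k)$; conversely every $g\in\mathrm{PL}(\tri;k)^{W_\tri}$ equals $f_{g|_{k\tri\cap\ZZ^n}}$ with $W_\tri$-invariant restriction. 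As the functional in \eqref{ch-wt} is linear, hence continuous, in $g$, and these integral elements are dense in the cone, testing \eqref{ch-wt} on them is equivalent to testing it on all of $\mathrm{PL}(\tri;k)^{W_\tri}$.

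Finally I would invoke Theorem~\ref{ono}, using that the left-hand side of \eqref{ch-wt} evaluated at $g=f_\phi$ is precisely the Chow weight of the diagonal cocharacter $\lambda$ attached to $\phi$, and that the affine $g$ are exactly those $\lambda$ fixing $\chow_k(X_\tri)$ (these arise from $T<\aut(X_\tri)$ together with the scaling). The numerical criterion for polystability of $\chow_k(X_\tri)$ under the torus $C(N(T))$ then states that the weight is nonnegative along every cocharacter of $C(N(T))$ and vanishes only along fixed directions; by the previous paragraph this reads exactly as \eqref{ch-wt} for all $g\in\mathrm{PL}(\tri;k)^{W_\tri}$, with equality if and only if $g$ is affine. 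Together with the equivalence of the first paragraph this is the Corollary.

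The step I expect to be the genuine obstacle is the passage from Theorem~\ref{ono}, which is phrased as an equivalence for the \emph{full} diagonal torus $\Delta$, to the equivalence I need for the \emph{subtorus} $C(N(T))$. This is legitimate only because \eqref{ch-wt} is the actual Mumford--Chow weight of a cocharacter, not merely a stability threshold, so that it may be restricted to the $W_\tri$-invariant cocharacters; one must moreover verify that the closed-orbit half of polystability --- the clause ``equality if and only if $g$ is affine'' --- transfers verbatim to the invariant subcone, i.e.\ that a $W_\tri$-invariant cocharacter fixes $\chow_k(X_\tri)$ precisely when its associated $g$ is affine.
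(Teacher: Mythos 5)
Your proposal is correct and takes essentially the same route as the paper: the paper's (very terse) proof is exactly to set $G_0=N(T)<G_{\chow_k(X_\tri)}$ in Theorem~\ref{kempf}, observe that $C(N(T))$ lies in the diagonal maximal torus of $\SL(N+1)$, and then combine this with Theorem~\ref{ono}, which is what you do. The details you supply --- reductivity of $N(T)$, the identification of $C(N(T))$ with the $W_\tri$-invariant diagonal subtorus, and the restriction of the Chow-weight criterion to $W_\tri$-invariant cocharacters (the step you rightly flag as the real content) --- are precisely the points the paper leaves implicit.
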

%\begin{coro}\label{T}
%Let $(X_\tri, L_\tri^k)\subset(\PP^N,\sO_{\PP^N}(1))$ be as above, then it is Chow polystable if and only 
%\eqref{ch-wt} holds with equality iff $g$ being affine.
%\end{coro}

 Theorem \ref{ono} was originally proved in \cite{Ono2013} for {\em integral} Delzant polytope by applying the powerful machinery developed by Gelfand-Kapranov-Zelevinsky in \cite{GKZ}. Here for reader's convenience, we give a  slightly simpler and  more direct proof.
\begin{proof}[Proof. of Theorem \ref{ono}]
Without loss of generality, we may assume $L_\tri$ is very ample and $k=1$. Also since the left hand side of \eqref{ch-wt} is invariant under adding a constants, we may assume $g\geq 0$.

Let $(\cX,\sL)\to \PP^1$ be any $T$-equivariant test configuration of $(X_\triangle, L_\triangle)$. So $\cX$ is a $n+1$-dimensional toric variety. Let
$$\triangle_g:=\{ (x, y)\in (\triangle\cap \ZZ^n)\times \RR_{\geq 0} \mid  0\leq y\leq g(x)\}\subset \RR^n\times \RR_{\geq 0}$$
be the moment polytope of  $\cX$, where $g$ is a  non-negative  rational piecewise-linear concave function defined over $\triangle$. Then we have
\begin{equation}\label{tri-g}
\vol(\tri_g)=\int_{\triangle }g(x)dx\text{   and   }\chi_{\tri_g}(1)-\chi_\tri(1)=\sum_{x\in \triangle \cap \ZZ^{n} }g( x).
\end{equation}

By the proof of \cite[Proposition 4.2.1]{Don02}, we know the {\em weight} of the $\CC^\times$-action on $\wedge^{\chi_\triangle(m)} H^0(\cX_0,\sL^m|_{\cX_0})$ is given by
\begin{equation}\label{w-m}
w_m=\chi_{\triangle_g}(m)-\chi_{\triangle}(m)
\end{equation}
with asymptotic expansions (cf. \cite[Propostion 4.1.3 and equation (4.2.2) ]{Don02})
\begin{equation}
\chi_\tri(m)=m^n \vol(\tri)+O(m^{n-1}) \text{  and  }\chi_{\tri_g}(m)=m^{n+1} \vol(\tri_g)+O(m^{n}) .
\end{equation}
On the other hand, the {\em Chow weight} for the degeneration $(\cX,\sL)\to \PP^1$  is given by the {\em normalized leading coefficient (n.l.c)} of the top degree term $\displaystyle\frac{m^{n+1}}{(n+1)!}$ in the degree $n+1$ polynomial of $m$:
$$\displaystyle w_m-m\chi_\triangle(m)\frac{w_1}{\chi_\triangle(1)}, $$
where the second term is added in order to {\em normalize} the $\CC^\times$-action on $H^0(\cX_0,\sL|_{\cX_0})$ to be {\em special linear} (cf.\cite[Theorem 3.9 and equation (3.8)]{RT2007}). Then by \eqref{w-m} we obtain
\begin{eqnarray*}
&&w_m-m\chi_\triangle(m)\frac{w_1}{\chi_\triangle(1)}\\
&=&\chi_{\triangle_g}(m)-\chi_{\triangle}(m)-m\chi_\triangle(m)\frac{\chi_{\tri_g}(1)-\chi_\tri(1)}{\chi_\triangle(1)}\\
&=&m^{n+1}\vol(\tri_g)-m^{n+1}\vol(\tri)\frac{\chi_{\tri_g}(1)-\chi_\tri(1)}{\chi_\triangle(1)}+O(m^n)\\
%&=&m^{n+1}\vol(\tri)(\vol(\tri_g)-m^{n+1}\vol(\tri)\frac{\chi_{\tri_g}(1)-\chi_\tri(1)}{\chi_\triangle(1)}+O(m^n)\\
&=&m^{n+1}\vol(\tri)\left(\frac{1}{\vol(\triangle ) }\int_{\triangle }g-\frac{1}{\chi_{\triangle}( 1) }\sum_{x\in \triangle \cap \ZZ^{n} }g( x)\right)+O(m^n)
\end{eqnarray*}
where for the last identity we have used \eqref{tri-g}. Hence the Chow weight for the $T$-equivariant test configuration 
$(\cX,\sL)\to\PP^1$ is precisely
$$
(n+1)!\vol(\tri)\left(\frac{1}{\vol(\triangle ) }\int_{\triangle }g-\frac{1}{\chi_{\triangle}( 1) }\sum_{x\in \triangle \cap \ZZ^{n} }g( x)\right), %\text{ for any } g\in \mathrm{PL}(\tri;k)
$$
%By the definition, being $T$-equivariant Chow polystable is equivalent to  the Chow weight being $\geq 0$ for any $T$-equivariant  degeneration, that is, with equality if and only if $g$ is affine. 
and our proof is completed.
\end{proof}

\begin{coro}[Corollary 4.7, \cite{Ono2013}]\label{co-ono}
If $(X_\triangle, L_\triangle^k)$ is Chow semistable for $k\in \NN$ then 
\begin{equation}\label{bary}
\frac{1}{\chi_\triangle (k)}\sum_{x \in k\triangle\cap \ZZ^n} x=\frac{1}{\vol(k\triangle)}\int_{k\triangle} x dx.
\end{equation}
\end{coro}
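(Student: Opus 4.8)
The plan is to read off the barycenter identity as the \emph{equality case} of the Chow-weight inequality \eqref{ch-wt}, applied to affine functions. The key structural observation is that although the cone $\mathrm{PL}(\triangle;k)$ consists of \emph{concave} upper envelopes $f_\phi$, it is nonetheless closed under negation on the affine locus: if $\ell:\RR^n\to\RR$ is affine, then $\ell$ is its own concave hull (the points $(x,\ell(x))$ lie on a single hyperplane, so $f_{\ell|_{k\triangle\cap\ZZ^n}}=\ell$ on $k\triangle$), whence $\ell\in\mathrm{PL}(\triangle;k)$, and likewise $-\ell\in\mathrm{PL}(\triangle;k)$. Thus the affine functions form a genuine linear subspace sitting inside the cone, and this is exactly the feature that converts a one-sided inequality into an equality.

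First I would record that Chow semistability supplies the inequality \eqref{ch-wt} for \emph{every} $g\in\mathrm{PL}(\triangle;k)$, not merely the polystable equality-characterization of Theorem \ref{ono}. This is immediate from the Chow-weight computation carried out in the proof of Theorem \ref{ono}: each $g$ in the cone is the concave roof function of a $T$-equivariant test configuration $(\cX,\sL)\to\PP^1$ whose Chow weight equals the positive multiple
$$
(n+1)!\,\vol(\triangle)\left(\frac{1}{\vol(k\triangle)}\int_{k\triangle}g-\frac{1}{\chi_\triangle(k)}\sum_{x\in k\triangle\cap\ZZ^n}g(x)\right)
$$
of the left-hand side of \eqref{ch-wt}, and semistability forces every such Chow weight to be $\geq 0$.

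Next I would apply this inequality simultaneously to $g=\ell$ and to $g=-\ell$ for an arbitrary affine $\ell$. Since the bracketed functional above is linear in $g$, the two inequalities have opposite signs, and together they pin the functional to zero:
$$
\frac{1}{\vol(k\triangle)}\int_{k\triangle}\ell\,dx=\frac{1}{\chi_\triangle(k)}\sum_{x\in k\triangle\cap\ZZ^n}\ell(x)\qquad\text{for all affine }\ell.
$$
Specializing to the coordinate functions $\ell(x)=x_j$, $j=1,\dots,n$, recovers \eqref{bary} componentwise, completing the argument.

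I do not expect a genuine obstacle here, as the content is purely linear once Theorem \ref{ono} is in hand; the only two points demanding a line of care are (i) verifying that \emph{both} $\ell$ and $-\ell$ lie in $\mathrm{PL}(\triangle;k)$ — which rests on the fact that an affine function is the unique member of its concave-hull class and hence survives negation inside the cone — and (ii) confirming that semistability, rather than just the polystability treated in Theorem \ref{ono}, already yields the non-strict inequality for the full cone. Both are settled by the Chow-weight formula above, so the remainder is the trivial specialization to coordinate functions.
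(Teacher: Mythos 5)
Your proposal is correct and takes essentially the same route the paper relies on (it cites Ono's Corollary 4.7 and, in the remark immediately following, identifies \eqref{bary} with the vanishing of the Chow weight for the torus $T=(\CC^\times)^n$): affine functions are their own concave envelopes, so $\pm\ell\in\mathrm{PL}(\triangle;k)$, and semistability applied to this pair forces the linear weight functional to vanish on affine $\ell$, whence \eqref{bary} by taking $\ell$ to be the coordinate functions. No gaps.
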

\begin{rema}
The identity \eqref{bary} is equivalent to the vanishing of Chow
% (i.e. the Chow version of Donaldson-Futaki)
weight for the group $T=(\CC^\times)^{n}<\aut(X_\tri)$.
 In particular,   \eqref{bary} implies that the left hand side of \eqref{ch-wt}  is invariant under addition of an {\em affine function} to $g$.
\end{rema}

%\begin{exam}
%$( X=\mathrm{Bl}_{[0,0,1]}\PP^2,\sO_X(1)=\pi^\ast\sO_{\PP^2}(3)\otimes \sO_X(-E)) $, then we have
%\begin{equation*}
%\frac{1}{\chi_\triangle (k)}\sum_{x \in k\triangle\cap \ZZ^n} x
%\ne\frac{1}{\vol(k\triangle)}\int_{k\triangle} x dx=\left(\frac{k}{12},\frac{k}{12}\right)\ .
%\end{equation*}
%Hence it is not Chow semistable for any $k\in \NN$.
%\end{exam}

\begin{exam}
Let $(X_\tri,L_\tri)=( \PP^{1},\sO_{\PP^1}(1) ) $ then
\begin{eqnarray}\label{P1}
&&\frac{1}{\vol( [
0,k] ) }\int_{0}^{k}g-\frac{1}{\chi_{\triangle }(
k) }\sum_{{x\in k\triangle \cap }\ZZ}g( x) \nonumber 
=\int_{0}^{k}g-\frac{1}{k+1}\sum_{i=0}^{k}g(i) \geq 0, \ \forall g \text{ concave }
\end{eqnarray}
follows from the fact that
\begin{equation}\label{1/2}
\frac{1}{k}\left(\frac{1}{2}g(0)+g(1)+\cdots+g(k-1)+\frac{1}{2}g(k)\right)
\geq\frac{1}{k+1}\left(g(0)+g(1)+\cdots+g(k-1)+g(k)\right),\ \ \forall g\geq 0.
\end{equation}
%holds for any non-negative $g$.
%On the other hand, \eqref{P1} is independent of adding a linear function, so we may assume $g(0)=g(1)=0$, from which \eqref{1/2} follows.
\end{exam}

\section{$X_3$ and $X_4$.}\label{m-est}
In this section, we will treat {$X_{\tri_3}$} and $X_{\tri_4}$ simultaneously since both $\tri_i, i=3,4$ allows a decomposition of $\tri_i$ with the {\em same} fundamental domain $\tri_0$ (cf. Figure \ref{de12}). Let
\begin{enumerate}
\item $(X_{\tri_3},L_{\tri_3})=(X_3,\sO_{X_3}(-K_{X_3}))=\{xyz=w^3\}\subset(\PP^3,\sO_{\PP^3}(1))$.
\item $(X_{\tri_4},L_{\tri_4})=(X_4,\sO_{X_4}(-K_{X_4}))=Q_1\cap Q_2\subset (\PP^4,\sO_{\PP^4}(1))$ with
\begin{equation}\label{d=4}
 \left\{ \begin{array}{llll}
         Q_1: &z_0z_1+z_2z_3+z_4^2 &=0& \\
        Q_2: & \lam z_0z_1+\mu z_2z_3+z_4^2&=0.& \lam\ne \mu
        \end{array} \right.
\end{equation}
\end{enumerate}
with moment polytope $\tri_i,i=3,4$ given in Figure \ref{de12}.
\begin{center}
\begin{figure}[h!]
\begin{tikzpicture}[x=.5cm,y=.5cm]
\draw[red, thick] (0,3) -- (0,0);
\draw[red, thick] (0,0) -- (3,0);
\draw[red,thick] (0,3) -- (3,0);
\node[text width=1.5cm] at (3.2,1.7) {\red $\triangle_0$};

\draw[gray, thick] (0,3) -- (-3,-3);
\node[text width=1.5cm] at (-2,0) { $\triangle_3$};

\draw[gray, thick] (-3,-3) -- (3,0);

\foreach \Point/\PointLabel in
{
(-3,-3)/B,
(-2,-1)/, (-2,-2)/,
(-1,1)/, (-1,0)/,(-1,-1)/, (-1,-2)/,
(0,3)/A,(0,2)/, (0,1)/, (0,0)/, (0,-1)/,
(1,2)/, (1,1)/, (1,0)/, (1,-1)/,
(2,1)/, (2,0)/, (3,0)/C}
\draw[fill=gray] \Point circle (2pt) node[above right] {$\PointLabel$};
\foreach \Point/\PointLabel in
{
(0,0)/O, 
 }
\draw[fill=black] \Point circle (2pt) node[below left] {$\PointLabel$};
\end{tikzpicture}
\begin{tikzpicture}[x=.5cm,y=.5cm]
\draw[red, thick] (0,3) -- (0,0);
\draw[red, thick] (0,0) -- (3,0);
\draw[red,thick] (0,3) -- (3,0);
\node[text width=1.5cm] at (3.2,1.7) {\red $\triangle_0$};

\draw[gray, thick] (0,3) -- (-3,0);
\draw[gray, thick] (-3,0) -- (0,-3);
\draw[gray, thick] (3,0) -- (0,-3);
\node[text width=1.5cm] at (-3,0) { $\triangle_4$};

\foreach \Point/\PointLabel in
{
(0,-3)/,
(-1,-2)/,(0,-2)/, (1,-2)/,
(-2,-1)/, (-1,-1)/,(0,-1)/, (1,-1)/,(2,-1)/,
(-3,0)/,(-2,0)/, (-1,0)/, (0,0)/, (1,0)/,(2,0)/,(3,0)/,
(-2,1)/, (-1,1)/, (0,1)/, (1,1)/,(2,1)/,
(-1,2)/, (0,2)/, (1,2)/, (0,3)/}
\draw[fill=gray] \Point circle (2pt) node[above right] {$\PointLabel$};
\foreach \Point/\PointLabel in
{
(0,0)/O, 
 }
\draw[fill=black] \Point circle (2pt) node[below left] {$\PointLabel$};
\end{tikzpicture}
\caption{$\triangle_0\subset\triangle_1$ and $\tri_0\subset\tri_2$.}\label{de12}
\end{figure}
\end{center}

Notice both $\tri_i,i=3,4$  are invariant under the action of Weyl group $W_i:=W_{\tri_i},i=3,4$ respectively, where
$$W_3=D_3=\left\la\si_3:=\begin{bmatrix}0 & -1\\ 1 & -1\end{bmatrix},\begin{bmatrix}0 & 1\\ 1 & 0\end{bmatrix} \right \ra
\text{ and }W_4=D_4=\left\la\si_4:=\begin{bmatrix}0 & -1\\ 1 & 0\end{bmatrix},\begin{bmatrix}0 & 1\\ 1 & 0\end{bmatrix} \right \ra <\mathrm{GL}(2,\ZZ).$$
To prove Theorem \ref{main}, first we  establish the necessary condition \eqref{bary}, which is a consequence of the following

%\begin{equation*}
%\int_\tri (\cdot) d\mu:
%\begin{array}{cccc}
%L^1(\tri,\mu) & \overset{}{ \xrightarrow{\hspace*{1.3cm}}} & \RR  \ .\\
%g & \longmapsto & \displaystyle\int_\tri g(x) d\mu(x) &
%\end{array}%
%\end{equation*}%
\begin{lemm}\label{ma-0}
Let $\mu$ be any measure defined on $\tri$  and  $\si\in\mathrm{SL}(2,\RR)$ be a element of {\em  order } $d$ satisfying
\begin{enumerate}
\item $\si(\tri)=\tri$;
\item $\si^\ast d\mu=d\mu$.
\end{enumerate}
 Suppose further $\tri$ admits a decomposition
$\displaystyle\tri=\bigsqcup_{i=0}^{d-1} \si^k(\tri_0)$ such that $\si^i(\tri_0^\circ)\cap\si^j(\tri_0^\circ)=\emptyset$ for $i\ne j$, where $\tri_0^\circ$ denotes the interior of a closed subset $\tri_0\subset\tri$.
Then
$$
\int_\tri x d\mu(x)=0.
$$
\end{lemm}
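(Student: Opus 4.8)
The plan is to reduce everything to one elementary fact about finite-order elements of $\SL(2,\RR)$ --- that such an element of order $\ge 2$ fixes no nonzero vector --- and then to transport the $\si$-invariance of $\mu$ onto the vector-valued integral $b:=\int_\tri x\,d\mu(x)\in\RR^2$. I first dispose of the linear algebra. Since $\si$ has finite order it is diagonalizable over $\CC$, with eigenvalues $d$-th roots of unity $\zeta,\zeta^{-1}$ whose product is $\det\si=1$. Were $1$ an eigenvalue, both eigenvalues would equal $1$ and diagonalizability would force $\si=\id$, contradicting $d\ge2$; hence $\det(\id-\si)=(1-\zeta)(1-\zeta^{-1})\neq0$, so $\id-\si$ is invertible. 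Multiplying the telescoping identity $(\si-\id)\sum_{i=0}^{d-1}\si^i=\si^d-\id=0$ on the left by $(\si-\id)^{-1}$ then gives $\sum_{i=0}^{d-1}\si^i=0$.

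Next I compute $b$ through the fundamental domain. Using the decomposition $\tri=\bigsqcup_{i=0}^{d-1}\si^i(\tri_0)$, whose pieces overlap only along the boundaries $\partial(\si^i\tri_0)$ and hence meet in sets negligible for the integral, I split
$$b=\sum_{i=0}^{d-1}\int_{\si^i(\tri_0)} x\,d\mu(x).$$
On the $i$-th summand I substitute $x=\si^i(y)$ with $y\in\tri_0$; since hypothesis (2) yields $(\si^i)^\ast d\mu=d\mu$ and $\si^i$ is linear, this summand equals $\si^i\big(\int_{\tri_0} y\,d\mu(y)\big)=\si^i(b_0)$, where $b_0:=\int_{\tri_0} y\,d\mu(y)$. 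Summing over $i$ and invoking the identity $\sum_{i=0}^{d-1}\si^i=0$ gives $b=\big(\sum_{i=0}^{d-1}\si^i\big)(b_0)=0$.

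In fact the decomposition is a convenience rather than a necessity: applying the same change of variables to all of $\tri$ at once, hypotheses (1) and (2) together with the linearity of $\si$ give directly $\si(b)=\int_\tri \si(x)\,d\mu(x)=\int_\tri x\,d\mu(x)=b$, so $b$ is fixed by $\si$ and therefore $b=0$ by the first paragraph. Either way, the only point requiring care --- the genuine ``obstacle'' --- is the first step: one really does need $\si\in\SL(2,\RR)$ (determinant $+1$) and $d\ge 2$ to exclude a nonzero fixed vector, since a determinant $-1$ element or the identity would make the conclusion false. Everything else is a routine change of variables, and the lemma then applies to $\mu=$ Lebesgue measure and to $\mu=$ counting measure on $k\tri\cap\ZZ^n$ to yield the barycenter identity \eqref{bary}.
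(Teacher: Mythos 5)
Your proof is correct, and its core (your second paragraph) is exactly the paper's own argument: split $\int_\tri x\,d\mu$ over the pieces $\si^i(\tri_0)$, change variables using $(\si^i)^\ast d\mu=d\mu$, and factor out the identity $\sum_{i=0}^{d-1}\si^i=0$. Two genuine differences are worth recording. First, the paper merely \emph{asserts} that matrix identity; your opening paragraph supplies the missing justification (both eigenvalues of $\si$ are roots of unity different from $1$, because $\det\si=1$ and $\si\ne\id$, so $\id-\si$ is invertible and the telescoping identity $(\si-\id)\sum_{i=0}^{d-1}\si^i=\si^d-\id=0$ applies), and it correctly pinpoints where $\det\si=+1$ and $d\ge 2$ enter: a reflection, which can equally well preserve $\tri$ and $\mu$, has $1$ as an eigenvalue and the conclusion fails for it. Second, your closing paragraph is a genuinely different and more economical route that the paper does not take: hypotheses (1) and (2) alone give $\si(b)=b$ for $b:=\int_\tri x\,d\mu(x)$, and invertibility of $\id-\si$ then forces $b=0$. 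This bypasses the fundamental-domain decomposition entirely and shows that hypothesis to be redundant for the conclusion. What the paper's computation (and your main one) buys is an explicit picture of $\tri$ tiled by the $\la\si\ra$-translates of $\tri_0$, which matches how the polytopes $\tri_3,\tri_4$ are actually subdivided in the rest of Section 3; as a proof of the lemma itself, your fixed-vector argument is the cleaner one.
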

\begin{proof}
By our assumption that $\si\in\mathrm{SL}(2,\RR)$ of order $d+1$, we have
$$
\sum_{i=0}^{d-1}\si^k=0\in \mathrm{SL}(2,\RR).
$$
Hence
\begin{eqnarray*}
\int_{\tri}x d\mu(x)
&=&\sum_{i=0}^d\int_{\si^k(\tri_0)}x d\mu(x)=\sum_{i=0}^d\int_{\tri_0}(x\circ\si^k)\cdot(\si^k)^\ast d\mu(x)\\
&=&\sum_{i=0}^d\int_{\tri_0}(x\circ \si^k)\cdot d\mu(x)=\int_{\tri_0}x\circ(\sum_{i=1}^d\si^k)d\mu(x)=0
\end{eqnarray*}
and our proof is completed.
\end{proof}

\tikzset{
  font={\fontsize{9pt}{12}\selectfont}}
\begin{center}
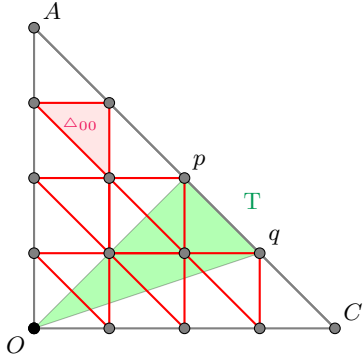
\begin{figure}[h!]
\begin{tikzpicture}[x=1cm,y=1cm]
\draw[fill=red, opacity=0.1]  (0,3) -- (1,3) -- (1,2) -- cycle;
\draw[fill=green, opacity=0.3]  (0,0) -- (2,2) -- (3,1) -- cycle;
\draw[red, thick] (0,3) -- (3,0);
\draw[red, thick] (0,3) -- (1,3);
\draw[red, thick] (0,2) -- (2,2);
\draw[red, thick] (1,2) -- (1,1);
\draw[red, thick] (0,2) -- (1,1);
\draw[red,thick] (1,1) -- (2,1);
\draw[red,thick] (0,1) -- (3,1);
\draw[red,thick] (1,0) -- (1,3);
\draw[red,thick] (2,0) -- (2,2);
\draw[red,thick] (1,1) -- (2,0);
\draw[red,thick] (0,1) -- (1,0);
\draw[red,thick] (3,0) -- (3,1);
\node[text width=.2cm] at (0.48,2.72) {\tiny\textcolor{OrangeRed}{$\triangle_{00}$}};
\draw(2.9,1.7)node{{\color{ForestGreen} T}};
%\node[text width=.5cm] at (.75,.75) {\red $\triangle_0$};

\draw[gray, thick] (0,0) -- (0,4);
\draw[gray, thick] (0,0) -- (4,0);
\draw[gray, thick] (0,4) -- (4,0);
\draw[gray, thick] (0,4) -- (4,0);
%\filldraw[black] (0,0) circle (2pt);\filldraw[black] (3,0) circle (2pt);
\foreach \Point/\PointLabel in
{
(0,4)/A,(0,3)/,(0,2)/, (0,1)/, (0,0)/,
(1,3)/, (1,2)/, (1,1)/, (1,0)/,
(2,2)/p, (2,1)/, (2,0)/, (3,0)/,
(3,1)/q, (4,0)/C}
\draw[fill=gray] \Point circle (2pt) node[above right] {$\PointLabel$};
\foreach \Point/\PointLabel in
{
(0,0)/O, 
 }
\draw[fill=black] \Point circle (2pt) node[below left] {$\PointLabel$};
\end{tikzpicture}
\caption{Barycenter division of  $k\tri_0$ and $T$. }\label{de-0}
\end{figure}
\end{center}

By adding an affine function to $g$ if necessary,  Lemma \ref{ma-0} and Corollary \ref{co-ono} implies that we only need to establish Theorem \ref{ono} for $g$ under the following additional:
\begin{assu} \label{ass}
Let $g\in \mathrm{PL}(\tri_i;k)^{W_i}, i=3,4$ satisfying:
\begin{enumerate}
\item $g(0)=\displaystyle\max_{x\in\tri_i} g(x)$;
\item $g$ vanishes on the vertices of $\tri_i$.
\end{enumerate}
\end{assu}
To achieve this,   we will establish the following two {\bf key estimates:}
\begin{itemize}
\item {\em Trapezoid for $T=\mathrm{Conv}(O,p,q)\subset\RR^2$}, the convex hull of $(O,p,q)$.
\begin{equation}\label{T-trap}
\frac{1}{\vol(T)}\int_T g\geq \frac{g(0)+g(p)+g(q)}{3}
\end{equation}
with equaliy if and only if $g$ is affine.

\item {\em  Trapezoid for standard subdivision}
\begin{eqnarray}\label{s-trap}
\int_{k\tri} g
&\geq& \frac{\vol(\tri_{00})}{3}\left(6\sum_{x\in (k\tri_i^\circ\cap \ZZ^2}  g(x)+3\sum_{x\in(k\dd \tri_i)\cap \ZZ^2}  g(x)-6\al g(0)\right)\nonumber\\
&=&\sum_{x\in (k\tri_i)^\circ\cap \ZZ^2}  g(x)+\frac{1}{2}\sum_{x\in(k\dd \tri_i)\cap \ZZ^2}  g(x)-\al g(0)\nonumber\\
&=&\sum_{x\in (k\tri_i)\cap \ZZ^2}  g(x)-\frac{1}{2}\sum_{x\in(k\dd \tri_i)\cap \ZZ^2}  g(x)-\al g(0)
%&=& \sum_{x\in \tri^\circ\cap \ZZ^n}  g(x)+3\sum_{x\in(\dd \tri)\cap \ZZ^n}  g(x)-\al g(0)\right)
\end{eqnarray}
with equaliy if and only if $g$ is affine, where  $\vol(\tri_{00})=\frac{1}{2}$ (cf. Figure  \ref{de-0})and $\al=\displaystyle\frac{6-\mathrm{ord}(\si_i)}{6}, i=3,4$.
\end{itemize}

\begin{proof}[Proof of Theorem \ref{main}]
To simplify our notation, in the rest of the proof we will use $\tri$ to denote $\tri_i,\  i=3,4$.  

Let us assume the validity of \eqref{T-trap} and \eqref{s-trap} for the moment and our goal is to
prove
\begin{equation}\label{m}
\frac{1}{\vol(k\tri)}\int_{k\tri} g\geq \frac{1}{\chi_\tri (k)} \sum_{x\in (k\tri)\cap\ZZ^n} g(x).
\end{equation}
for $g$ satisfying Assumption \ref{ass}.
By applying  the Pick formula (cf. \cite{Pic1899} and \cite{Pul1979})
$$\chi_\tri(k)=\vol(k\tri)+\frac{b}{2}+1 \text{ with } b=|(k\dd\tri)\cap\ZZ^n|$$
the left hand side of \eqref{m} can be written as
\begin{eqnarray*}
&&\left(\frac{1}{\vol(k\tri)}-\frac{1}{\chi_\tri(k)}\right)\int_{k\tri} g+ \frac{1}{\chi_\tri(k)} \int_{k\tri} g\\
(\text{ by }\eqref{s-trap}\ )&\geq &\frac{\frac{b}{2}+1}{\vol(k\tri)\cdot \chi_\tri(k)}\int_{k\tri} g+ \frac{1}{\chi_\tri(k)}\left(\sum_{x\in k\tri^\circ\cap \ZZ^n}  g(x)+\frac{1}{2}\sum_{x\in(\dd k\tri)\cap \ZZ^n}  g(x)-\al g(0)\right)\\
( \text{ by }\eqref{T-trap} \ )&\geq &
\frac{\frac{b}{2}+1}{b\cdot\vol(T)\cdot\chi_\tri(k)}\cdot\frac{\vol(T)}{3}\left(2\sum_{x\in (k\dd \tri)\cap\ZZ^n}g(x)+bg(0) \right)+\\
&&+ \frac{1}{\chi_\tri(k)}\left(\sum_{x\in k\tri^\circ\cap \ZZ^n}  g(x)+\frac{1}{2}\sum_{x\in(k\dd \tri)\cap \ZZ^n}  g(x)-\al g(0)\right)\\
&\geq &
\frac{\frac{b}{2}+1}{3b\cdot\chi_\tri(k)}\left(2\sum_{x\in (k\dd \tri)\cap\ZZ^n}g(x)+bg(0) \right)+\\
&&+ \frac{1}{\chi_\tri(k)}\left(\sum_{x\in k\tri^\circ\cap \ZZ^n}  g(x)+\frac{1}{2}\sum_{x\in(k\dd \tri)\cap \ZZ^n}  g(x)-\al g(0)\right)\\
%&\geq& \frac{1}{\chi_\tri (k)}\sum_{x\in (k\tri)\cap\ZZ^n} g(x)
\end{eqnarray*}
So to prove \eqref{m}, all we need is
\begin{equation}
\frac{1+b/2}{3b}\left(2\sum_{x\in (k\dd \tri)\cap\ZZ^n}g(x)+bg(0) \right)\geq\left( \frac{1}{2}\sum_{x\in(k\dd \tri)\cap \ZZ^n}  g(x)+\al g(0)\right)
\end{equation}
which is equivalent to
$$
\left(\frac{1+b/2}{3}-\al\right)g(0)\geq \left(\frac{1}{2}-\frac{1+b/2}{3b}\cdot 2\right)\sum_{x\in (k\dd \tri)\cap\ZZ^n}g(x).
$$
Using the fact $\displaystyle g(0)=\max_{x\in \tri} g(x)\geq \frac{1}{b}\sum_{x\in (k\dd \tri)\cap\ZZ^n}g(x)$,  we know that  \eqref{m} is a consequence of the following:
$$
\frac{1}{b}\geq\frac{\displaystyle\frac{1}{2}-\frac{2+b}{3b}}{\displaystyle\frac{2+b}{6b}-\al}=\frac{b-4}{b^2+(2-6\al)b}
$$
which is equivalent to $4\geq 6\al-2$. But this always hold as long as $\al=\displaystyle\frac{6-\mathrm{ord}(\si_i)}{6} \leq 1, \ i=1,2$.  And our proof of Theorem \ref{main} is completed {for $X_3$ and $X_4$}.
\end{proof}

\begin{proof}[Proof of \eqref{T-trap} and \eqref{s-trap}]
\eqref{T-trap} follows from the concavity of $g$ and trapezoidal rule. For \eqref{s-trap}, we triangulate  $\tri_0$ into the union of {\em basic triangles} $\tri_{00}$'s as illustrated  in Figure \ref{de-0} and then extend this triangulation to the whole $\tri_i$ via the Weyl group $W_i$.  Then \eqref{s-trap} follows by noticing that
\begin{enumerate}
\item each interior lattice points of $\tri_i^\circ$  that is {\em not} the point  $O$ is exactly a vertex of $6$ basic triangles of  $\tri_{00}$;
\item each boundary lattice point of $(\dd\tri)^\circ$ is is exactly a vertex of $3$ basic triangles of  $\tri_{00}$;
\item  the point $O$ is the vertex of $\mathrm{ord}(\si_i), i=1,2$ basic triangles of  $\tri_{00}$ (cf. Figure \ref{de-0}).
\end{enumerate}
And our proof of \eqref{T-trap} and \eqref{s-trap} is thus completed.
\end{proof}

\section{$X_2$.}
Recall that 
$(X_{\tri_2},L_{\tri_2})=(X_2=\PP^1\times\PP^1/(\ZZ/4\ZZ),\sO_{X_2}(-2K_{X_2}))\subset(\PP^6,\sO_{\PP^6}(1))$
 with the $\ZZ/4\ZZ=\la\xi\ra$-action generated by  $\xi\cdot ([z_1,z_2],[w_1,w_2])=([\sqrt{-1} z_1,z_2],[-\sqrt{-1} w_1,w_2])$.
Then the Weyl group $W_2=W_{\tri_2}=\ZZ/2\ZZ\times \ZZ/2\ZZ$ and 
$$
\tri_2=\conv\{(-2,0),(2,0),(0,1),(0,-1)\} \text{ (cf. Figure \ref{de2})}.
$$

It turns out this is the {\em trickest} case among all $\{X_i\}_{1\leq i\leq 4}$.
\begin{center}
\begin{figure}[h!]
\begin{tikzpicture}[x=.5cm,y=.5cm]
\draw[fill=green, opacity=0.3]  (0,0) -- (4,0) -- (2,1) -- cycle;
\draw(3.3,1.0)node{{\color{ForestGreen} $T_i$}};
\draw[red, thick] (0,2) -- (0,0);
\draw[red, thick] (0,0) -- (4,0);
\draw[red,thick] (0,2) -- (4,0);
\node[text width=1.5cm] at (2.7,1.9) {\red $\triangle_0$};
\node[text width=1cm] at (2.8,1.35) { $a_1$};
\node[text width=1cm] at (0.65,2.4) { $a_0$};
\node[text width=1cm] at (4.8,0.35) { $a_2$};

\draw[gray, thick] (0,2) -- (-4,0);
\draw[gray, thick] (-4,0) -- (0,-2);
\draw[gray, thick] (4,0) -- (0,-2);
\node[text width=1.5cm] at (-4,0) { $\triangle_2$};

\foreach \Point/\PointLabel in
{
(0,-2)/, 
(-2,-1)/, (-1,-1)/,(0,-1)/, (1,-1)/,(2,-1)/,
(-4,0)/,(-3,0)/,(-2,0)/, (-1,0)/, (0,0)/, (1,0)/,(2,0)/,(3,0)/,(4,0)/,
(-2,1)/, (-1,1)/, (0,1)/, (1,1)/,(2,1)/,
(0,2)/}
\draw[fill=gray] \Point circle (2pt) node[above right] {$\PointLabel$};
\foreach \Point/\PointLabel in
{
(0,0)/O, 
 }
\draw[fill=black] \Point circle (2pt) node[below left] {$\PointLabel$};
\end{tikzpicture}
\caption{$\triangle_0\subset k\triangle_2$ with $k=2$.}\label{de2}
\end{figure}
\end{center}

For this purpose, we need to extend the main estimate  \eqref{s-trap} (cf. \eqref{s-trap1}) used in the last section.
Let  $$a_i:=(2i,k-i)\in \RR^2\text{ for } 0\leq i\leq k,$$
denote the {\em integral points} of the boundary of $k\tri_0\subset k\tri_2$ (cf. Figure \ref{de2})  and  
let  
\begin{equation}\label{bT}
T_i:=\conv (0,a_i,a_{i+1}), 0\leq i\leq k-1\text{ and }b=|\dd (k\tri_2)\cap \ZZ^2| 
\end{equation}
Then $\tri_0=\displaystyle\bigcup_{i=0}^{k-1} T_i$ and  we have the following:
\begin{lemm}\label{ax}
Let $\Delta\subset \RR^2$ be a integral polytope, and $g\in \mathrm{PL}(\tri_2;k)^{W_2}$ satisfying
\begin{equation}\label{s-trap1}
\sum_{p\in (k\tri_2)\cap\ZZ^2}g(p)-\int_{k\tri_2}g(x)dV\leq \dfrac{1}{2}\sum_{p\in \dd (k\tri_2) \cap \ZZ^2}g(p)+\sum_{p\in k\tri_2}\delta_k(p)g(p)  
\end{equation} 
for a {\em fixed} function $\delta_k(p)$ satisfying  \[\sum_{p\in k\tri_2} \delta_k(p)=1\] 
with equality holding if and only if\ $g$ is constant.
Then 
\begin{equation}\label{b+2}
\dfrac{(b+2)|W_2|}{2b\cdot\vol(T)}\sum_i\int_{T_i}g\geq  \dfrac{1}{2}\sum_{p\in \dd (k\tri_2) \cap \ZZ^2}g(p)+\sum_{p\in k\tri_2}\delta_k(p)g(p),
\end{equation}
with equality holding if and only if $g$ is constant
implies 
\begin{equation}\label{mm}
\dfrac{1}{\vol(k\tri_2)}\int_{k\tri_2}g(x)dV\geq \dfrac{1}{|(k\tri_2)\cap\ZZ^2|}\sum_{p\in (k\tri_2)\cap \ZZ^2}g(p), 
\text{ }|(k\tri_2)\cap\ZZ^2|=\chi_{\tri_2}(k)
\end{equation}
 with equality holds if and only if $g$ is constant.
\end{lemm}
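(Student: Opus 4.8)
The plan is to read Lemma~\ref{ax} as a purely formal implication: granting both the trapezoidal bound \eqref{s-trap1} and the refined estimate \eqref{b+2}, I deduce the averaging inequality \eqref{mm}. Abbreviate $V=\vol(k\tri_2)$, $N=\chi_{\tri_2}(k)=|(k\tri_2)\cap\ZZ^2|$, $I=\int_{k\tri_2}g\,dV$, $S=\sum_{p\in(k\tri_2)\cap\ZZ^2}g(p)$ and $b=|\partial(k\tri_2)\cap\ZZ^2|$. First I would invoke Pick's formula, exactly as in the previous section, to write $N=V+\tfrac{b}{2}+1$. Since $V,N>0$, clearing denominators turns \eqref{mm}, i.e.\ $I/V\ge S/N$, into the equivalent statement
\[
\Big(\tfrac{b}{2}+1\Big)\,I\ \ge\ V\,(S-I),
\]
so that it suffices to control $S-I$ from above.

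Next I would feed in the two hypotheses through their common right-hand side $R:=\tfrac12\sum_{p\in\partial(k\tri_2)\cap\ZZ^2}g(p)+\sum_{p\in k\tri_2}\delta_k(p)g(p)$. Hypothesis \eqref{s-trap1} reads $S-I\le R$. To simplify \eqref{b+2} I use the $W_2$-symmetry: since $g\in\mathrm{PL}(\tri_2;k)^{W_2}$ and $k\tri_2=\bigcup_{w\in W_2}w(k\tri_0)$ with $k\tri_0=\bigcup_{i=0}^{k-1}T_i$ (overlaps of measure zero), one has $\sum_i\int_{T_i}g=\tfrac{1}{|W_2|}\int_{k\tri_2}g=\tfrac{I}{|W_2|}$. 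Substituting this collapses the left-hand side of \eqref{b+2} to $\tfrac{b+2}{2b\,\vol(T)}\,I$, so \eqref{b+2} becomes $R\le\tfrac{b+2}{2b\,\vol(T)}\,I$. Chaining the two bounds and multiplying by $V>0$ gives
\[
V\,(S-I)\ \le\ V R\ \le\ V\cdot\frac{b+2}{2b\,\vol(T)}\,I.
\]

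The decisive input is the elementary geometric identity $V=b\cdot\vol(T)$ for the rhombus $k\tri_2$: explicitly $\vol(k\tri_2)=4k^2$, $b=4k$, and each slice $T_i=\conv(0,a_i,a_{i+1})$ has $\vol(T_i)=k$ (a one-line computation giving $a_i\times a_{i+1}=-2k$, independent of $i$), whence $b\cdot\vol(T)=4k\cdot k=4k^2=V$. With this identity the prefactor collapses, $V\cdot\frac{b+2}{2b\,\vol(T)}=\frac{b+2}{2}=\frac{b}{2}+1$, and the displayed chain becomes exactly $V(S-I)\le(\tfrac{b}{2}+1)I$, which is \eqref{mm}. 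For the equality clause I would note that, because $V=b\,\vol(T)$ renders the last step of the chain an equality, equality in \eqref{mm} forces equality in both \eqref{s-trap1} and \eqref{b+2}, each of which holds iff $g$ is constant; conversely a constant $g$ makes every step an equality.

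Within this lemma the argument is thus bookkeeping, and the only subtlety I anticipate is that the estimate is \emph{tight}: the constant $\tfrac{(b+2)|W_2|}{2b\,\vol(T)}$ in \eqref{b+2} is calibrated precisely so that, after the symmetry reduction and the Pick substitution, the inequality telescopes with no slack, which forces one to keep track of the exact value of $\vol(T)$ rather than merely a bound. The genuinely hard work lies outside this statement, in establishing the input \eqref{b+2} for $\tri_2$ itself --- the delicate refined trapezoid estimate that makes $X_2$ the trickiest case --- which here I am entitled to assume.
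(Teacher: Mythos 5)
Your proposal is correct and follows essentially the same route as the paper: reduce \eqref{mm} to the comparison $\bigl(\tfrac{N}{V}-1\bigr)I\ge R$ via \eqref{s-trap1}, collapse the left side of \eqref{b+2} to $\tfrac{b+2}{2b\vol(T)}I$ by $W_2$-invariance, and match constants through Pick's formula together with the identity $\vol(k\tri_2)=b\cdot\vol(T_0)$. The only cosmetic difference is that you verify $\vol(T_i)=k$ and $b=4k$ by direct computation, whereas the paper extracts the same constant by plugging $g=1$ into the reduced inequality.
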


\begin{proof}
%Now, we need to show 
%\[\dfrac{1}{\vol(k\Delta)}\int_{k\Delta}g\geq \dfrac{1}{|k\Delta \cap \ZZ^2|}\sum_{p\in k\Delta \cap \ZZ^2}g(p)\]
By \eqref{s-trap1}, we deduce that \eqref{mm} follows from  
\[\left(\dfrac{1}{\vol(k\tri_2)}-\dfrac{1}{|(k\tri_2) \cap \ZZ^2|}\right)\int_{k\tri_2}g\geq \dfrac{1}{|(k\tri_2) \cap \ZZ^2|}\left(\dfrac{1}{2}\sum_{p\in \dd (k\tri_2) \cap \ZZ^2}g(p)+\sum_{p\in( k\tri_2)\cap\ZZ^2}\delta_k(p)g(p)\right).\]
which is equivalent to
\begin{equation}\label{b+}
\left(\dfrac{|(k\tri_2) \cap \ZZ^2|}{\vol(k\tri_2)}-1\right)\int_{k\tri_2}g\geq \dfrac{1}{2}\sum_{p\in \dd (k\tri_2) \cap \ZZ^2}g(p)+\sum_{p\in (k\tri_2)\cap\ZZ^2}\delta_k(p)g(p).
\end{equation} 
By subdividing $k\tri_2$ into $b$  triangles as in Figure \ref{de2}, that is   $k\tri_2=\displaystyle\bigcup_{g\in W_2}g\cdot \tri_0$ and  $\tri_0=\displaystyle\bigcup_{i=0}^{k-1} T_i$ 
%with \[T_i:=\conv(0, a_i,a_{i+1})\]  
then  we have
\[\vol(k\tri_2)=b\sum_i\vol(T_i)=b\cdot\vol(T_0).\] 
Using the fact 
$b=|\dd (k\tri_2)\cap \ZZ^2|$ and pluging $g=1$ into \eqref{b+} we deduce
\[\left(\dfrac{|k\tri_2 \cap \ZZ^2|}{b\cdot\vol(T)}-1\right)b\cdot\vol(T)= {1\over 2}b+1.\] 
Hence
\[\left(\dfrac{|k\tri_2 \cap \ZZ^2|}{b\cdot\vol(T)}-1\right)=\dfrac{b+2}{2b\cdot\vol(T)},\] 
our proof is completed by plugging this into \eqref{b+}.
\end{proof}

%Denote \[Supp(\widetilde{\delta}_k)(x)=\{x\in k\Delta|\widetilde{\delta}_k(x)\neq 0\}\] 

%Given any concave function $g(x)$, we can define $g_{\widetilde{\delta}_k}(x)$ by 
%\[g_{\widetilde{\delta}_k}(x):=\inf\{f(x)| f:k\Delta\rightarrow\RR \text{ which is concave },f(p)=g(p)\text{ for }p\in Supp(\widetilde{\delta}_k).\}\]
%Then we have 
%\begin{lemm}
%Suppose there exists $\delta_k: k\tri_2 \cap \ZZ^2\rightarrow \RR$ such that for all $g:k\tri_2 \rightarrow\RR $ concave, we have \[\sum_{p\in (k\tri_2)\cap\ZZ^2}g(p)-\int_{k\tri_2}g(x)dV\leq \dfrac{1}{2}\sum_{p\in \dd (k\tri_2 )\cap \ZZ^2}g(p)+\sum_{p\in k\tri_2}\delta_k(p)g(p) \] with 
%\[\sum \delta_k(p)=1.\] Denote $\widetilde{\delta}_k(p)$ and $g_{\widetilde{\delta}_k}(x)$ as above. Then $\tri_2$ is $k$ Chow stable if   for any  $g\in \mathrm{PL}(\tri_2,k)^{W_2} $
%\[\dfrac{b+2}{2b\cdot \vol(T)}\sum_i\int_{T_i}g_{\widetilde{\delta}_k}\geq  \dfrac{1}{2}\sum_{p\in \dd (k\tri_2) \cap \ZZ^2}g_{\widetilde{\delta}_k}(p)+\sum_{p\in k\tri_2}\delta_k(p)g_{\widetilde{\delta}_k}(p),\]
%\end{lemm}
%\begin{proof}
%The value of right hand side remains unchanged, while the left hand side decrease. Hence it is a stronger inequality. 
%\end{proof}

Now to prove Theorem \ref{main}, one needs to establish the estimate \eqref{s-trap1} and \eqref{b+2} for an appropriate  $\delta_k$ in Lemma \ref{ax} (cf. \eqref{s-trap1}).

{\em Step 1. establishing \eqref{s-trap1} for an  appropriate $\de_k$.}
Using $W_2=\ZZ/2\ZZ \times \ZZ/2\ZZ$ symmetry of $k\tri_2$, it suffices to consider $\De_0$ as in Figure \ref{de2}.  Now let us do a sub-division
 \[k\tri_2=\conv\{(\pm 0,k), (\pm 2k,0)\}=\tri_{00}\cup\tri_{01} \text{ (cf. Figure \ref{de3}) }\]
 with $\tri_{00}:=\conv((0,k),(0,0),(k,0))$ and $\tri_{01}:=\conv((0,k),(k,0),(2k,0))$.
Clearly, $\tri_{00}$ is $\SL(2,\ZZ)$ equivalent to $\tri_{01}$. 
\begin{center}
\begin{figure}[h!]
\begin{tikzpicture}[x=.70cm,y=.70cm]
\draw[fill=red, opacity=0.1]  (0,2) -- (0,0) -- (2,0) -- cycle;
\draw[fill=blue, opacity=0.1]  (0,2) -- (2,0) -- (4,0) -- cycle;
%\draw[fill=green, opacity=0.3]  (0,0) -- (4,0) -- (2,1) -- cycle;
\draw(2.2,0.5)node{{\color{Blue} $\tri_{01}$}};
\draw(0.8,0.5)node{{\color{Red} $\tri_{00}$}};
\draw[red, thick] (0,2) -- (0,0);
\draw[red, thick] (0,0) -- (4,0);
\draw[red,thick] (0,2) -- (4,0);
\node[text width=1.5cm] at (3.1,1.8) {\red $\triangle_0$};

\draw[gray, thick] (0,2) -- (-4,0);
\draw[gray, thick] (-4,0) -- (0,-2);
\draw[gray, thick] (4,0) -- (0,-2);
\node[text width=1.5cm] at (-4,0) { $\triangle_2$};

\foreach \Point/\PointLabel in
{
(0,-2)/, 
(-2,-1)/, (-1,-1)/,(0,-1)/, (1,-1)/,(2,-1)/,
(-4,0)/,(-3,0)/,(-2,0)/, (-1,0)/, (0,0)/O, (1,0)/,(2,0)/,(3,0)/,(4,0)/,
(-2,1)/, (-1,1)/, (0,1)/, (1,1)/,(2,1)/,
(0,2)/}
\draw[fill=gray] \Point circle (2pt) node[below left] {$\PointLabel$};
\foreach \Point/\PointLabel in
{
(0,0)/O, 
 }
\draw[fill=black] \Point circle (2pt) node[below left] {$\PointLabel$};
\end{tikzpicture}
\caption{$\triangle_0\subset k\triangle_2$ with $k=2$.}\label{de3}
\end{figure}
\end{center}
Now let us introduce a triangulation of $\tri$ by introducing a triangulation on $\tri_0$:
\begin{itemize}
\item using the  standard triangulation of  $\tri_{00}$ (cf. Figure \ref{de-0});
\item  transporting the triangulation of $\tri_{00}$ to $\tri_{01}$ via the $\SL(2,\ZZ)$,
\end{itemize}
Applying \eqref{s-trap},  we obtain 
\[\int_{k\tri_2}g(x) \geq \sum_{p\in (k\tri_2)\cap\ZZ^2}g(p)-\dfrac{1}{2}\sum_{p\in \dd (k\tri_2)\cap\ZZ^2}g(p)+\dfrac{1}{6}g(0,\pm k)-\dfrac{1}{6}g(\pm2 k,0)-\dfrac{1}{3}(g(\pm k,0))-\dfrac{1}{3}g(0,0)\]
where 
\begin{enumerate}
\item for $g(0,\pm k)$,  we have  $\dfrac{1}{6}=\dfrac{4}{6}-\dfrac{3}{6}$ since the vertices $(0,\pm k)$ are shared by $4$ triangles instead of $3$ in the triangulation above. 
\item for $g(\pm 2k,0 )$,  we have  $-\dfrac{1}{6}=\dfrac{2}{6}-\dfrac{3}{6}$ since the vertices $(\pm 2k,0)$ are shared by $2$ triangles instead of $3$ in the triangulation above.
\item for $g(\pm k,0 )$ and $g(0,0)$,  we have  $-\dfrac{1}{3}=\dfrac{4}{6}-\dfrac{6}{6}$ since the vertices $(\pm k,0)$ are shared by $4$ triangles instead of $6$ ( as they are boundary point of $\tri_{00}$ and $\tri_{01}$ but interior points of $\tri_2$).
\end{enumerate}
Hence
\begin{eqnarray*}
\sum_{p\in \dd k\tri_2\cap\ZZ^2}g(p)-\int_{k\tri_2}g(x)
&\leq &\dfrac{1}{2}\sum_{p\in \dd (k\tri_2)\cap\ZZ^2}g(p)-\dfrac{1}{6}g(0,\pm k)+\dfrac{1}{6}g(\pm2 k,0)+\dfrac{1}{3}
(g(\pm k,0))+\dfrac{1}{3}g(0,0)\\
&\leq &\dfrac{1}{2}\sum_{p\in \dd (k\tri_2)\cap\ZZ^2}g(p)-\dfrac{1}{6}g(0,\pm k)+\dfrac{1}{6}g(\pm2 k,0)+g(0,0)
\end{eqnarray*}
since $g(0,0)=\displaystyle\max_{k\tri} g\geq g(\pm k,0)$ for $g\in \mathrm{PL}(\tri_2;k)^{W_2}$. 
Thus we established \eqref{s-trap1} with $\de_k:k\tri_2\to \RR$ defined by
\begin{align}
\de_k(p)=\left\{\begin{array}{lll} 
0 & p=(0,0)\\
-1/6  & p=(0,\pm k)\\ 
1/6  & p=(\pm 2k,0)\\
0 & \text{ otherwise}. \end{array}\right.
\end{align}

%we have ,  so the above inequality can be written as 
%\[\sum_{p\in \dd k\tri_2\cap\ZZ^2}g(p)-\int_{k\tri_2}g(x)\leq \dfrac{1}{2}\sum_{p\in \dd k\tri_2\cap\ZZ^2}g(p)-\dfrac{1}{6}g(0,\pm k)+\dfrac{1}{6}g(\pm2 k,0)+g(0,0).\] 

{\em Step 2, establishing \eqref{b+2}}.
%We now recall the proof of axiomatic approach. 
%\begin{theo}\label{thm: ax 1}
%$\Delta$ is asymptomatic Chow stable.
%\end{theo}
That is, for all $g\in \mathrm{PL}(\tri_2;k)^{W_2}$ we need to show
	\[\dfrac{b+2}{2b\cdot \vol(T_0)}\sum_{g\in W_2}\sum_i\int_{g\cdot T_i}g\geq  \dfrac{1}{2}\sum_{p\in \dd (k\tri_2 )\cap \ZZ^2}g(p)+\sum_{p\in k\tri_2}\delta_k(p)g(p).\]
%Now we estimate all the triangle $T_i$ except the 4 triangles
Let us first consider $ T_0= \conv((0,0),(2,k-1),(0,k))$.  
Applying \eqref{T-trap}, we have
\begin{equation}\label{t0}
\int_{T_0}g\geq \dfrac{\vol(T_0)}{3}(g(0)+g(2,k-1)+g(0,k)).
\end{equation}
 By the $W_2$-symmetry of $g$, we have $g(-2,k-1)=g(2,k-1)$, this together with the concavity of $g$ imply $ g(0,k-1)\geq g(2,k-1)$ and $g(0,k-1)\geq g(0,k)$, so 
\begin{equation}\label{k-1}
g(0,k-1)\geq \dfrac{g(2,k-1)+g(0,k)}{2}.
\end{equation}
Therefore,  
\begin{align*}
\frac{1}{\vol(T_0)}\int_{T_0} g
=&\frac{1}{\vol(T_0)}\left(\int_{T_{00}} g+\int_{T_{01}} g\right)\\
\geq & \dfrac{\vol(T_{00})}{\vol(T_{0})}\left(\dfrac{g(0,0)+g(2,k-1)+g(0,k-1)}{3}\right)
+\dfrac{\vol(T_{01})}{\vol(T_{0})}\left(\dfrac{g(0,k)+g(2,k-1)+g(0,k-1)}{3}\right)\\
\geq& \dfrac{k-1}{k}\left(\dfrac{g(0,0)+g(2,k-1)+\frac{g(2,k-1)+g(0,k)}{2}}{3}\right)
+\dfrac{1}{k}\left(\dfrac{g(0,k)+g(2,k-1)+\frac{g(2,k-1)+g(0,k)}{2}}{3}\right)\\
=&\left(\dfrac{k-1}{k}\right)\dfrac{g(0,0)}{3}+\left(\dfrac{1}{3}+\dfrac{1}{6}\right)g(2,k-1)+\left(\dfrac{1}{6}+\dfrac{1}{3k}\right)g(0,k)\\
=&\left(1-\dfrac{1}{k}\right)\dfrac{g(0,0)}{3}+\left(\dfrac{1}{3}+\dfrac{1}{6}\right)g(2,k-1)+\left(\dfrac{1}{3}-\dfrac{1}{6}+\dfrac{1}{3k}\right)g(0,k)\\
=&\dfrac{1}{3}\left(g(0,0)+g(2,k-1)+g(0,k)\right)
-\dfrac{1}{3k}g(0,0)+\dfrac{1}{6}g(2,k-1)+\left(-\dfrac{1}{6}+\dfrac{1}{3k}\right)g(0,k)
\end{align*}

\begin{center}
\begin{figure}[h!]
\begin{tikzpicture}[x=.70cm,y=.70cm]
\draw[fill=green, opacity=0.1]  (0,0) -- (0,3) -- (2,2) -- cycle;
\draw(1.3,3.0)node{{\color{ForestGreen} $T_0$}};
\draw(0.65,2.35)node{{\color{ForestGreen} $T_{00}$}};
\draw(0.6,1.45)node{{\color{ForestGreen} $T_{01}$}};
\draw[green, thick] (0,2) -- (2,2);
\draw[red, thick] (0,3) -- (0,0);
\draw[red, thick] (0,0) -- (6,0);
\draw[red,thick] (0,3) -- (6,0);
\node[text width=1.5cm] at (5.5,1.7) {\red $\triangle_0$};

\draw[gray, thick] (0,3) -- (-6,0);
\draw[gray, thick] (-6,0) -- (0,-3);
\draw[gray, thick] (6,0) -- (0,-3);
\node[text width=1.5cm] at (-6,0) { $k\triangle_2$};

\foreach \Point/\PointLabel in
{(0,-3)/,
(-2,-2)/, (-1,-2)/,(0,-2)/, (1,-2)/,(2,-2)/,
(-4,-1)/, (-3,-1)/,(-2,-1)/, (-1,-1)/,(0,-1)/, (1,-1)/,(2,-1)/,(3,-1)/, (4,-1)/,
(-6,0)/, (-5,0)/, (-4,0)/,(-3,0)/,(-2,0)/, (-1,0)/, (0,0)/O, (1,0)/,(2,0)/,(3,0)/,(4,0)/, (5,0)/, (6,0)/,
(-4,1)/, (-3,1)/,(-2,1)/, (-1,1)/, (0,1)/, (1,1)/,(2,1)/,(3,1)/, (4,1)/,
(-2,2)/, (-1,2)/,(0,2)/, (1,2)/,(2,2)/,
(0,3)/}
\draw[fill=gray] \Point circle (2pt) node[below left] {$\PointLabel$};
\foreach \Point/\PointLabel in
{
(0,0)/O, 
 }
\draw[fill=black] \Point circle (2pt) node[below left] {$\PointLabel$};
\end{tikzpicture}
\caption{$\triangle_0\subset k\triangle_2$ with $k=3$.}\label{de3}
\end{figure}
\end{center}

%\[\begin{tikzpicture}
%\draw (0,2)--(2,2);
%\draw (0,3)--(2,2);
%\draw (0,3)--(0,0);
%\draw (0,0)--(2,2);
%\end{tikzpicture}\]

Combining the estimates with the ones for $T_i, i\ne 0$ based on \eqref{T-trap}, we obtain  
\begin{align*}
&\dfrac{(b+2)|W_2|}{2b\vol(T_0)}\sum_{i=0}^{k-1}\int_{T_i}g\\
\geq& \dfrac{b+2}{2b}\left(\dfrac{bg(0)+ 2\sum_{p\in \dd (k\Delta)}g(p)}{3}\right)\\
&+\dfrac{b+2}{2b}\left(-\dfrac{|W_2|}{3k}g(0)+\dfrac{|W_2|}{6}\cdot g(2,k-1)+\left(\dfrac{2}{3k}-\dfrac{1}{3}\right)(g(0,k)+g(0,-k))\right)\\
=&:\sum_{p\in (k\tri_2)\cap\ZZ^2} \eta(p)g(p)
\end{align*}
where $\eta: k\tri_2\cap\ZZ^2\to \RR$ is defined by the right hand side of the above inequality.

To establish \eqref{b+2}, it suffices to show	
$$
\sum_{p\in k\tri_2\cap\ZZ^2} \eta(p)g(p)\geq  \dfrac{1}{2}\sum_{p\in \dd (k\tri _2)\cap \ZZ^2}g(p)+\sum_{p\in k\tri_2}\delta_k(p)g(p),
$$
which is equivalent to 
\begin{equation}\label{DeTi}
(\eta(0)-\widetilde{\delta}_k(0))g(0)\geq \sum_{p\in ((k\tri_2) \cap \ZZ^2)\setminus \{0\}}(\widetilde{\delta}_k(p)-\eta(p))g(p)
\end{equation}
with  $\widetilde{\delta}_k(p)$ being defined by the following identity
\[ \sum_{p\in \dd k\tri_2 \cap \ZZ^2}\widetilde{\delta}_k(p)g(p)= \dfrac{1}{2}\sum_{p\in \dd (k\tri_2) \cap \ZZ^2}g(p)+\sum_{p\in k\tri_2}\delta_k(p)g(p).\] 
 As $b=4k$,  for $p\neq (0,0)$, 
$\widetilde{\delta}_k(p)-\eta(p)$ is given by  
%\[(\eta(0)-\widetilde{\delta}_k(0))g(0)\geq \sum_{p\in (k\Delta \cap \ZZ^2)\setminus \{0\}}(\widetilde{\delta}_k(p)-\eta(p))g(p).\] As $b=4k$,  for $p\neq (0,0)$
%$\widetilde{\delta}_k(p)-\eta(p)$ is given by  

\[(\widetilde{\delta}_k-\eta)(p)= \left\{\begin{matrix}
0 &\text{ if }& p\in k\tri_2^\circ\\
{\ }\\
\dfrac{1}{2}-\left(\dfrac{b+2}{2b}\right)\left(\dfrac{2}{3}\right)=\dfrac{1}{6}-\dfrac{1}{6k} &\text{ if }& p=(\pm 2i,\pm (k-i))\text{ for }i\neq 0,1,k\\
{\ }\\
\left(\dfrac{1}{2}+\dfrac{1}{6}\right)-\left(\dfrac{b+2}{2b}\right)\left(\dfrac{2}{3}\right)=\dfrac{1}{3}-\dfrac{1}{6k}&\text{ if }&p=(\pm 2k, 0)\\
{\ }\\
\dfrac{1}{2}-\left(\dfrac{b+2}{2b}\right)\left(\dfrac{2}{3}+\dfrac{1}{6}\right)=\dfrac{1}{12}-\dfrac{5}{24k}&\text{ if }&p=(\pm 2, \pm (k-1))\\
{\ }\\
\left(\dfrac{1}{2}-\dfrac{1}{6}\right)-\left(\dfrac{b+2}{2b}\right)\left(\dfrac{2}{3}+\left(\dfrac{2}{3k}-\dfrac{1}{3}\right)\right)
=\dfrac{1}{6}-\dfrac{5}{12k}-\dfrac{1}{6k^2}&\text{ if }&p=(0,\pm k).
\end{matrix}\right.\]
which are non-negative when $k\geq k_0$ for some $k_0$ independent of $g$. 
As a consequence,  
we have 
$$
(\eta(0)-\widetilde{\delta}_k(0))g(0)\geq \sum_{p\in (k\tri_2) \cap \ZZ^2}(\widetilde{\delta}_k(p)-\eta(p))g(0)\geq \sum_{p\in (k\tri_2) \cap \ZZ^2}(\widetilde{\delta}_k(p)-\eta(p))g(p).
$$
with equality holds if and only if $g$ is constant, and hence \eqref{b+2} is established.  The proof for the case $X_2=X_{\tri_2}$ is completed by applying Lemma \ref{ax}.

\begin{rema}
One notices that the estimate \eqref{k-1} can be improved 
\[g(0,k-1)\geq\lambda g(2,k-1)+(1-\lambda)\dfrac{k-1}{k}g(0,k),\]  
with $0\leq \lambda \leq 1$. By choosing an appropriate $\lam$, one can verify \eqref{b+2}   for $k\geq 2$.
\end{rema}

\section{$X_1$.}
%\subsection{toric variety of $\PP^2/ \ZZ_{9}$}
Recall  $(X_{\tri_1},L_{\tri_1})=(X_1=\PP^2/(\ZZ/9\ZZ),\sO_{X_1}(-3K_{X_1}))\subset (\PP^6,\sO_{\PP^6}(1))$ with the $\ZZ/9\ZZ=\la\xi=\exp2\pi\sqrt{-1}/9\ra$-action generated by  $\xi\cdot [z_0, z_1,z_2]=[z_0,\xi,z_1,\xi^{-1},z_2]$. Then the Weyl group of $X_1$ is $W_1=\ZZ/2$ and 
$$\tri_1=\conv\{(1,2), (2,1), (-3,-3)\}\subset \RR^2 \text{(cf. Figure \ref{de3})}.$$
\begin{center}
\begin{figure}[h!]
\begin{tikzpicture}[x=1cm,y=1cm]
\draw[black, thick](-3,-3)--(1,2);
\draw[black, thick](-3,-3)--(2,1);
\draw[black, thick](2,1)--(1,2);
%\draw[red, thick](-1,-0.5)--(-0.5,-1);
%\draw[red, thick](-1,-0.5)--(1.5,1.5);
%\draw[red, thick](1.5,1.5)--(-0.5,-1);
%\node[text width=1.5cm] at (2.2,1.7) {\red $\triangle_0$};
\draw[red, thick](-3.5,-3.5)--(2.5,2.5);
%\draw[red, thick](-1,-0.5)--(1.5,1.5);
\foreach \Point/\PointLabel in
{
%boundary point
(-3,-3)/,(-1,-0.5)/,(-0.5,-1)/, (2,1)/, (1,2)/,(1.5,1.5)/,
%triangles int
(-2.5,-2.5)/, (-2,-2)/, (-1.5,-1.5)/, (-1,-1)/,
(-0.5,-0.5)/, (0,0)/, (0.5,0.5)/, (1,1)/,
(1.5,1)/, (1,0.5)/, (0.5,0)/, (0,-0.5)/,
(1,1.5)/, (0.5,1)/, (0,0.5)/, (-0.5,0)/,
 }
\draw[fill=gray] \Point circle (2pt) node[below left] {$\PointLabel$}; 
\foreach \Point/\PointLabel in
{
(0,0)/O, 
 }
\draw[fill=black] \Point circle (2pt) node[below left] {$\PointLabel$};
%\draw[fill=black] \Point circle (2pt) ;
\end{tikzpicture}
\caption{$ k\triangle_1$ with $k=2$.}\label{de3}
\end{figure}
\end{center}

\begin{theo}$X_1$ is Chow unstable.
\end{theo}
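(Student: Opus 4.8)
The plan is to show that the \emph{necessary} condition for Chow semistability recorded in Corollary \ref{co-ono} already fails, and fails for every $k\geq 1$. By that corollary, if $(X_1,L_{\tri_1}^{k})$ were Chow semistable then the lattice barycenter of $k\tri_1$ would agree with its continuous barycenter; equivalently, by the Remark following Corollary \ref{co-ono}, the Chow weight of the maximal torus $T=(\CC^\times)^2<\aut(X_1)$ would vanish. Since an unstable point is by definition not semistable, it suffices to exhibit for each $k$ a discrepancy
\[
\frac{1}{\chi_{\tri_1}(k)}\sum_{x\in k\tri_1\cap\ZZ^2}x\ \neq\ \frac{1}{\vol(k\tri_1)}\int_{k\tri_1}x\,dx .
\]

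First I would evaluate the right-hand side. As $\tri_1=\conv\{(1,2),(2,1),(-3,-3)\}$, its centroid is $\tfrac13\big((1,2)+(2,1)+(-3,-3)\big)=(0,0)$, so the continuous barycenter of $k\tri_1$ is the origin for all $k$, and it remains to prove that $\sum_{x\in k\tri_1\cap\ZZ^2}x\neq 0$. Here I would exploit the Weyl symmetry $W_1=\ZZ/2$, which acts on $\tri_1$ by the reflection $(x_1,x_2)\mapsto(x_2,x_1)$: this forces the lattice sum to lie on the diagonal $x_1=x_2$, so the whole question collapses to the single linear functional $x_1+x_2$. Setting
\[
S_k:=\sum_{x\in k\tri_1\cap\ZZ^2}(x_1+x_2),
\]
the barycenter identity holds exactly when $S_k=0$, and Chow instability follows once $S_k\neq 0$.

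The heart of the argument is then the evaluation of $S_k$. Since $\tri_1$ is a \emph{lattice} polytope, standard weighted Ehrhart theory shows $k\mapsto S_k$ is a genuine polynomial in $k$ of degree at most $3$; its leading coefficient equals $\int_{\tri_1}(x_1+x_2)\,dx=\vol(\tri_1)\cdot 0=0$ because the centroid is the origin, and $S_0=0$ since $0\cdot\tri_1=\{O\}$. Thus $S_k=ak^2+bk$ is pinned down by two values. I would compute these by slicing $k\tri_1$ along the level sets $x_1+x_2=s$: on such a line the points are $(t,s-t)$ with $t$ ranging over the integers in $\big[(4s-3k)/9,\,(5s+3k)/9\big]$, the interval cut out by the two slanted facets $5x_1-4x_2\geq -3k$ and $4x_1-5x_2\leq 3k$. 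Carrying out the floor/ceiling bookkeeping over $-6k\leq s\leq 3k$ yields $S_1=-4$ and $S_2=-8$ (and, as a check, $S_3=-12$), whence $S_k=-4k$. In particular $S_k\neq 0$ for every $k\geq 1$, so \eqref{bary} fails and $(X_1,L_{\tri_1}^{k})$ is Chow unstable for all $k\geq 1$.

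I expect the main obstacle to be precisely this last step. Although the reduction to the scalar $S_k$ is clean, establishing $S_k\neq 0$ \emph{simultaneously for all} $k$ is what requires the structural input (polynomiality together with the two vanishing coefficients), and the explicit lattice count at each level $s$, with its dependence on residues modulo $9$, is the one genuinely computational ingredient. A tempting shortcut is to treat only $k=1$, where the seven lattice points $\{(2,1),(1,1),(1,2),O,(-1,-1),(-2,-2),(-3,-3)\}$ sum to $(-2,-2)\neq 0$; but since the assertion is instability for \emph{every} $k$, the identity $S_k=-4k$ is what one ultimately wants.
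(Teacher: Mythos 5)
Your proof is correct, and its overall strategy coincides with the paper's: both reduce Chow instability to the failure of the necessary barycenter condition \eqref{bary} of Corollary \ref{co-ono}, both observe that the continuous barycenter of $\tri_1$ is the origin (you via the vertex-average formula for the centroid of a triangle, the paper via the equality case of \eqref{T-trap}), and both use the Weyl symmetry $W_1=\ZZ/2\ZZ$ to collapse the lattice sum onto the diagonal. Where you genuinely diverge is in the evaluation of $S_k=\sum_{x\in k\tri_1\cap\ZZ^2}(x_1+x_2)$: the paper computes this sum for every $k$ by a direct, somewhat laborious summation organized along the axis of symmetry, carried out for $k$ even and left to the reader for $k$ odd, arriving at the lattice barycenter $\frac{4(-k,-k)}{9k^2+3k+2}$; you instead invoke weighted Ehrhart theory --- $S_k$ is a polynomial in $k$ of degree at most $3$ whose cubic coefficient is $\int_{\tri_1}(x_1+x_2)\,dx=0$ --- and pin it down by the finite computations $S_1=-4$, $S_2=-8$, $S_3=-12$, obtaining $S_k=-4k$, in agreement with the paper's answer (their sum is $(-2k,-2k)$). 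Your interpolation route is cleaner and eliminates the general-$k$ bookkeeping and parity case analysis; its only cost is the appeal to polynomiality of weighted lattice-point sums, which is standard and, for a linear integer-valued weight, follows from ordinary Ehrhart theory applied to the lattice polytope $\{(x,y)\in\tri_1\times\RR : 0\le y\le x_1+x_2+6\}$. One small caveat: your claim that the constant term vanishes ``since $0\cdot\tri_1=\{O\}$'' conflates the polynomial's value at $k=0$ with the geometric sum there; this is in fact true, but it requires the same Ehrhart-theoretic input rather than being immediate, and in any case it is redundant in your argument, since degree $\le 2$ (from the vanishing cubic coefficient) together with the three computed values $S_1,S_2,S_3$ already forces $S_k=-4k$.
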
 \label{x1}
To see this, first we notice  that \eqref{T-trap} implies
\begin{lemm}\label{m-int}
$\displaystyle\int_{\tri_1} xdx=(0,0)$.
\end{lemm}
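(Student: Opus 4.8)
The plan is to prove Lemma \ref{m-int} by exhibiting a symmetry of $\tri_1$ that forces the barycenter of the uniform (Lebesgue) measure to be the origin, then invoke Lemma \ref{ma-0}. The triangle $\tri_1 = \conv\{(1,2),(2,1),(-3,-3)\}$ is highly symmetric: although the Weyl group $W_1 = \ZZ/2$ only reflects across the diagonal $x_1 = x_2$, the \emph{set of vertices} is invariant under a natural order-$3$ rotation, since the three vertices are cyclically permuted by an appropriate element of $\SL(2,\ZZ)$. First I would identify the matrix $\si\in\SL(2,\RR)$ of order $3$ that cyclically permutes the three vertices $(1,2)\mapsto(2,1)\mapsto(-3,-3)\mapsto(1,2)$ (one checks this is realized by a linear map fixing the centroid, which is the origin), so that $\si(\tri_1)=\tri_1$.

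With such a $\si$ in hand, the strategy is to verify the three hypotheses of Lemma \ref{ma-0} with $\mu = dx$ the Lebesgue measure on $\tri_1$ and $d = 3$. Condition (1), $\si(\tri_1)=\tri_1$, follows from the vertex permutation since a triangle is the convex hull of its vertices. Condition (2), $\si^\ast\,dx = dx$, holds because $\si\in\SL(2,\RR)$ has determinant $1$, so it preserves the Lebesgue measure. For the decomposition hypothesis, I would split $\tri_1$ into the three congruent subtriangles obtained by coning the centroid (the origin) to each edge, and take $\tri_0$ to be one of these; then $\tri_1 = \bigsqcup_{i=0}^{2}\si^i(\tri_0)$ with the interiors disjoint, exactly as required. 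Lemma \ref{ma-0} then yields $\int_{\tri_1} x\,dx = 0$ directly.

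\begin{proof}[Proof of Lemma \ref{m-int}]
The three vertices $(1,2),(2,1),(-3,-3)$ are cyclically permuted by an element $\si\in\SL(2,\RR)$ of order $3$ that fixes the origin (their centroid is $\tfrac{1}{3}\big((1,2)+(2,1)+(-3,-3)\big)=(0,0)$, and the linear map sending the ordered pair of edge vectors from the origin appropriately has determinant $1$ and cube equal to the identity). Hence $\si(\tri_1)=\tri_1$ and, since $\det\si=1$, $\si^\ast\,dx = dx$. Coning the origin to each of the three edges decomposes $\tri_1=\bigsqcup_{i=0}^{2}\si^i(\tri_0)$ into three subtriangles with pairwise disjoint interiors, where $\tri_0$ is one of them. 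Applying Lemma \ref{ma-0} with $d=3$ and $\mu=dx$ gives $\int_{\tri_1}x\,dx=(0,0)$.
\end{proof}

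I expect the only genuine point requiring care to be the explicit verification that the vertex-permuting map $\si$ lies in $\SL(2,\RR)$ and has order exactly $3$ (not merely that it permutes the vertices set-theoretically), since Lemma \ref{ma-0} needs the unimodularity to preserve $dx$ and the order-$d$ property to make $\sum_i \si^i = 0$. This is a routine $2\times 2$ computation: one writes $\si$ as the product of the matrix carrying the basis $\{(1,2),(2,1)\}$ to $\{(2,1),(-3,-3)\}$ and checks $\det = 1$ and $\si^3 = \id$. Everything else is a direct application of the already-established Lemma \ref{ma-0}.
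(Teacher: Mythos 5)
Your proof is correct, but it takes a genuinely different route from the paper's. The paper obtains Lemma \ref{m-int} as the equality case of \eqref{T-trap}: for an affine function the mean over a triangle equals the average of its three vertex values, so applying this to the coordinate functions $x_1,x_2$ and using $(1,2)+(2,1)+(-3,-3)=(0,0)$ gives $\int_{\tri_1}x\,dx=\vol(\tri_1)\cdot\tfrac{1}{3}\big((1,2)+(2,1)+(-3,-3)\big)=(0,0)$ in one line. You instead produce the cyclic symmetry, which works out explicitly to $\si=\frac{1}{3}\begin{bmatrix}-8 & 7\\ -7 & 5\end{bmatrix}$: one checks $\si(1,2)=(2,1)$, $\si(2,1)=(-3,-3)$, $\det\si=1$, $\tr\si=-1$, hence $\si^2+\si+\id=0$ by Cayley--Hamilton, so $\si^3=\id$ and $\si$ preserves Lebesgue measure; with the coning decomposition of $\tri_1$ at the origin (which $\si$ permutes, since $\si$ fixes $O$ and cycles the edges), Lemma \ref{ma-0} applies with $d=3$. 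Your route costs an explicit matrix computation but buys a conceptual dividend: it exhibits exactly which symmetry of $\tri_1$ forces the continuous barycenter to vanish, and it makes transparent why the lattice-point barycenter computed in the proposition immediately after this lemma does \emph{not} vanish --- your $\si$ lies in $\SL(2,\RR)$ (indeed $\SL(2,\QQ)$) but does not preserve $\ZZ^2$, so Lemma \ref{ma-0} cannot be run with the counting measure on $(k\tri_1)\cap\ZZ^2$.

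One correction: in your opening paragraph you assert that the vertex permutation is realized by an element of $\SL(2,\ZZ)$. It is not --- the entries of $\si$ lie in $\frac{1}{3}\ZZ$ --- and it cannot be, since an integral symmetry would force the lattice barycenter to vanish by the same argument, contradicting the Chow instability of $X_1$ established in Section 5. Your proof body only claims $\si\in\SL(2,\RR)$, which is all that Lemma \ref{ma-0} requires, so this slip does not affect the validity of the argument.
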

%	 \begin{proof}
%	 For any affine function $a(x,y)$, and for any triangle $\triangle$,  
%	 \[\int_{\triangle}a(x,y)=\mathrm{Vol}(\triangle)\sum_{i=1}^3\dfrac{a(x_i,y_i)}{3},\]
%	 where $(x_i,y_i)$ ae the vertex points. Therefore 
%	 \[(\overline{x},\overline{y})=\left(\dfrac{1+2-3}{2},\dfrac{2+1-3}{2}\right)=(0,0).\]
%	 \end{proof}
By the necessity of Chow semistability \eqref{bary},  Theorem \ref{x1} follows from Lemma \ref{m-int} and the  following 	 
\begin{prop}
	  \[\frac{1}{\chi_{\tri_1}(k)}\sum_{x=(x_1,x_2)\in (k\tri_1)\cap \ZZ^2} x=\dfrac{4\cdot (-k,-k)}{9k^2+3k+2}\neq 0
	  \text{ with } \chi_{\tri_1}(k)=|(k\tri_1)\cap \ZZ^2|=\frac{9k^2+3k+2}{2}.
	  \] 
	  In particular, it violates \eqref{bary} and $X_1$ is Chow {\em unstable} for all $k\geq 1$.
	 \end{prop}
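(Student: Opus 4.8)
The plan is to compute both the lattice-point count $\chi_{\tri_1}(k) = |(k\tri_1)\cap\ZZ^2|$ and the lattice-point sum $\sum_{x\in(k\tri_1)\cap\ZZ^2} x$ directly, then observe that the normalized sum (the lattice barycenter) fails to vanish, which by the necessity of Chow semistability in Corollary~\ref{co-ono} (equation~\eqref{bary}) forces instability. Since Lemma~\ref{m-int} already establishes that the continuous barycenter $\frac{1}{\vol(\tri_1)}\int_{\tri_1} x\,dx = 0$, the whole point is that the discrete barycenter is displaced from the origin, so \eqref{bary} is violated for every $k$.

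First I would pin down the combinatorics of $k\tri_1 = \conv\{(k,2k),(2k,k),(-3k,-3k)\}$. The area of $\tri_1$ is readily computed from the three vertices via the shoelace formula, giving $\vol(\tri_1) = \tfrac{9}{2}$, so $\vol(k\tri_1) = \tfrac{9}{2}k^2$. For the lattice-point count I would apply Pick's formula $\chi_{\tri_1}(k) = \vol(k\tri_1) + \tfrac{b}{2} + 1$ where $b = |\dd(k\tri_1)\cap\ZZ^2|$; computing the number of boundary lattice points on each of the three edges (using $\gcd$ of the edge-vector coordinates) yields $b = 3k$, and hence $\chi_{\tri_1}(k) = \tfrac{9}{2}k^2 + \tfrac{3k}{2} + 1 = \tfrac{9k^2+3k+2}{2}$, matching the claimed denominator.

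The main work is the lattice-point sum $S_k := \sum_{x\in(k\tri_1)\cap\ZZ^2} x$. The cleanest approach exploits the $W_1 = \ZZ/2$ symmetry generated by the reflection swapping the two vertices $(k,2k)\leftrightarrow(2k,k)$ and fixing $(-3k,-3k)$: this reflection is the transposition $(x_1,x_2)\mapsto(x_2,x_1)$, so $S_k$ must be symmetric in its two coordinates, i.e. $S_k = (s,s)$ for some scalar $s$, reducing the problem to computing $s = \tfrac{1}{2}\sum_{x\in(k\tri_1)\cap\ZZ^2}(x_1+x_2)$. I would evaluate $\sum (x_1+x_2)$ by summing the linear functional $\ell(x) = x_1+x_2$ over the lattice points, organizing the sum by slices along the level sets of $\ell$ (the fibers are lattice segments inside $k\tri_1$), and extracting the total. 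The answer should come out to $S_k = 4\cdot(-k,-k)$ as stated; the sign being negative reflects that the lone far vertex $(-3k,-3k)$ pulls the discrete barycenter toward the third quadrant even though the continuous integral balances.

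The hard part will be carefully counting the lattice points slice-by-slice and getting the constant $4$ exactly right, since off-by-one boundary contributions are easy to mishandle on the edges where $\gcd$-many interior lattice points sit. Once $S_k = (-4k,-4k)$ is confirmed, the conclusion is immediate: $\frac{1}{\chi_{\tri_1}(k)} S_k = \frac{4\cdot(-k,-k)}{9k^2+3k+2} \neq (0,0)$ for every $k\geq 1$, so the discrete barycenter does not equal the continuous one, \eqref{bary} fails, and $(X_1,\sO_{X_1}(k))$ is Chow unstable for all $k\geq 1$. I would double-check the final formula against the base case $k=1$ by hand-enumerating $(\tri_1)\cap\ZZ^2$ to verify both $\chi_{\tri_1}(1)=7$ and $S_1=(-4,-4)$, which serves as an independent sanity check on the general computation.
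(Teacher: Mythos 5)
Your strategy is the same as the paper's: compute $\chi_{\tri_1}(k)$ via Pick's formula (your boundary count $b=3k$ and $\chi_{\tri_1}(k)=\tfrac{9k^2+3k+2}{2}$ are correct), use the $W_1=\ZZ/2$ reflection $(x_1,x_2)\mapsto(x_2,x_1)$ to reduce the lattice sum to a single scalar, evaluate that scalar by slicing, and conclude instability from the necessity condition \eqref{bary} together with Lemma \ref{m-int}. However, there is a genuine error in your target value for the lattice sum $S_k$: you convert the stated barycenter $\frac{4\cdot(-k,-k)}{9k^2+3k+2}$ into $S_k=(-4k,-4k)$, forgetting that $\chi_{\tri_1}(k)=\tfrac{9k^2+3k+2}{2}$ carries a factor of $\tfrac12$. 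The statement is equivalent to
\begin{equation*}
S_k=\chi_{\tri_1}(k)\cdot\frac{4\cdot(-k,-k)}{9k^2+3k+2}=(-2k,-2k),
\end{equation*}
not $(-4k,-4k)$. In particular, your proposed sanity check at $k=1$ would fail rather than confirm your computation: the seven lattice points of $\tri_1$ are $(1,2),(2,1),(-3,-3),(0,0),(1,1),(-1,-1),(-2,-2)$, whose sum is $(-2,-2)$, in agreement with the paper's Example (barycenter $(-2,-2)/7$) and with $(-2k,-2k)$, but contradicting your target $(-4,-4)$. Had you carried out the slice-by-slice sum correctly, you would have obtained $(-2k,-2k)$ and, measured against your stated target, might have wrongly concluded that the proposition's formula is false, when it is your conversion that is off.

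Beyond this, note that the entire computational content of the paper's proof, namely the explicit evaluation showing $\sum x_1=-2k$ (done there for $k$ even by grouping lattice points relative to the diagonal axis of symmetry, with the odd case left to the reader), remains only a plan in your write-up; getting that constant right is precisely where the work lies. The error is not fatal to the qualitative conclusion, since any nonzero value of $S_k$ violates \eqref{bary} and yields Chow instability for all $k\geq 1$, but as written your proposal aims at a false identity and would fail its own consistency check.
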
	 
	 \begin{proof}
	  
%		 we know that \[x=\dfrac{x+y}{2}+\dfrac{x-y}{2},y=\dfrac{x+y}{2}-\dfrac{x-y}{2}\]
%	 	 And $(x,y)\in k\Delta_X\cap \ZZ^2$ iff  $(y,x)\in k\Delta_X\cap \ZZ^2$. 
%		 Therefore,
%	 	the discrete mean 
By the $W_1=\ZZ/2\ZZ$-symmetry, we have
 \begin{equation}
 \frac{1}{\chi_{\tri_1}(k)}\sum_{x\in (k\tri_1)\cap \ZZ^2} x=\frac{(1,1)}{\chi_{\tri_1}(k)}\sum_{x\in (k\tri_1)\cap \ZZ^2} x_1
\end{equation}
with $x_1\in \RR$ being the first component of $x=(x_1,x_2)\in \RR^2$.

Let us define $m:=\displaystyle \frac{1}{\chi_{\tri_1}(k)}\sum_{x\in (k\tri_1)\cap \ZZ^2} x_1$. For simplicity, we will only treat the case that  $k$ is {\em even}, 
\footnote{For $k$ {\em odd}, the derivation is similar and will be left to the readers.}
then by considering the symmetry about the axis in Figure \ref{de3}, we obtain
\begin{eqnarray*}
-m&=&\frac{2}{\chi_{\tri_1}(k)} \left(\sum_{i=1}^{k/2} \frac{9(i-1)(9(i-1)+1)}{2}+\sum_{i=1}^{k/2}\left(\frac{ 5+9(i-1)}{2}+
\frac{(9(i-1)+4)(9(i-1)+5)}{2}\right)\right)\\
&&+\frac{1}{\chi_{\tri_1}(k)} \frac{\frac{9k}{2}(\frac{9k}{2}+1)}{2}-\frac{3k}{2}\\
&=&\frac{2k}{\chi_{\tri_1}(k)}.
\end{eqnarray*}

\end{proof}
	
\begin{exam}
For $k=1$,
\[ \frac{1}{\chi_{\tri_1}(1)}\sum_{x\in \tri_1\cap \ZZ^2} x=\dfrac{(-2,-2)}{7}\]
\end{exam}.
\begin{rema}\label{osn}
We remark that this example as well as the example in \cite{OSN12} have {\em not} ruled out the possibility of using the asymptotic Chow semistability to compactify the moduli space of Fano varieties contrasting to the case studied in \cite{WX14}, since for those punctured families one might have  a limit which is asymptotic Chow polystable and strict K-semistable simultaneuously. 
%The key issue is that the uniqueness result in \cite[Theorem 3.2 and 3.3]{WX14} fails to hold in Fano situation.
\end{rema}		 	

\begin{bibdiv}
\begin{biblist}

\bib{Aub76}{article}{
   author={Aubin, Thierry},
   title={\'Equations du type Monge-Amp\`ere sur les vari\'et\'es k\"ahleriennes
   compactes},
   journal={C. R. Acad. Sci. Paris S\'er. A-B},
   volume={283},
   date={1976},
   number={3},
   pages={Aiii, A119--A121},
   review={\MR{0433520}},
}
 %J. Differential Geom. Volume 62, Number 2 (2002), 289-349.
%\bib{Don2002}{article}{
%author={Donaldson, Simon  K.},
%title={Scalar Curvature and Stability of Toric Varieties},
%journal={J. Differential Geom.},
%volume={62},
%number={2},
%date={2002},
%pages={289d-349},
%}

\bib{Don01}{article}{
author={Donaldson, Simon K.},
title={Scalar curvature and projective embeddings, I,},
journal={J. Differential Geom.},
volume={59},
pages={479-522},
year={2001},
}

\bib{Don02}{article}{
   author={Donaldson, S. K.},
   title={Scalar curvature and stability of toric varieties},
   journal={J. Differential Geom.},
   volume={62},
   date={2002},
   number={2},
   pages={289--349},
   issn={0022-040X},
   review={\MR{1988506}},
}
% Gelfand, I. M.; Kapranov, M. M.; Zelevinsky, A. V. Discriminants, resultants and multidimensional determinants. Reprint of the 1994 edition. Modern Birkhäuser Classics. Birkhäuser Boston, Inc., Boston, MA, 2008. x+523 pp. ISBN: 978-0-8176-4770-4

\bib{Gie77}{article}{
   author={Gieseker, D.},
   title={On the moduli of vector bundles on an algebraic surface},
   journal={Ann. of Math. (2)},
   volume={106},
   date={1977},
   number={1},
   pages={45--60},
   issn={0003-486X},
   review={\MR{466475}},
}

\bib{GKZ}{book}{
author={Gelfand, I. M.}
author={Kapranov, M. M.}
author={Zelevinsky, A. V.}
title={Discriminants, resultants and multidimensional determinants.}
publisher={Birkh\"auser Boston, Inc., Boston, MA}
pages={ x+523 pp.}
date={1994}
}

\bib{Kemp1978}{article}{
author={Kempf, George},
title={Instability in invariant theory.},
journal={Ann. of Math.},
volume={108},
number={2},
year={1978},
pages={299-316},
}

\bib{LiW15}{article}{
author={Li, Jun},
author={ Wang, Xiaowei},
title={Hilbert-Mumford criterion for nodal curves},
journal={Compos. Math.},
volume={151},
date={2015},
pages={2076-2130},
}

\bib{LWX2014}{article}{
   author={Li, Chi},
   author={Wang, Xiaowei}
   author={Xu, Chenyang},
   title={Degeneration of Fano K\"ahler-Einstein manifolds},
   journal={ArXiv:1411.0761 v1},
   %number={1},
   %pages={197-232},
   date={2014}
}

\bib{Mab2004}{article}{
   author={Mabuchi, Toshiki},
   title={An obstruction to asymptotic semistability and approximate
   critical metrics},
   journal={Osaka J. Math.},
   volume={41},
   date={2004},
   number={2},
   pages={463--472},
   issn={0030-6126},
   review={\MR{2069096}},
}

\bib{MFK}{book}{
   author={Mumford, D.},
   author={Fogarty, J.},
   author={Kirwan, F.},
   title={Geometric invariant theory},
   series={Ergebnisse der Mathematik und ihrer Grenzgebiete (2) [Results in
   Mathematics and Related Areas (2)]},
   volume={34},
   edition={3},
   publisher={Springer-Verlag, Berlin},
   date={1994},
   pages={xiv+292},
   isbn={3-540-56963-4},
   review={\MR{1304906}},
}

\bib{Ness84}{article}{
   author={Ness, Linda},
   title={A stratification of the null cone via the moment map},
   note={With an appendix by David Mumford},
   journal={Amer. J. Math.},
   volume={106},
   date={1984},
   number={6},
   pages={1281--1329},
   issn={0002-9327},
   review={\MR{765581}},
}

\bib{Ono2013}{article}{
author={Ono, Hajime},
title={Algebro-geometric semistability of polarized toric manifolds.},
journal={Asian J. Math.},
volume={17},
year={2013},
pages={609-616},
}

\bib{OSN12}{article}{
   author={Ono, Hajime},
   author={Sano, Yuji},
   author={Yotsutani, Naoto},
   title={An example of an asymptotically Chow unstable manifold with
   constant scalar curvature},
   language={English, with English and French summaries},
   journal={Ann. Inst. Fourier (Grenoble)},
   volume={62},
   date={2012},
   number={4},
   pages={1265--1287},
   issn={0373-0956},
   review={\MR{3025743}},
}

\bib{OSS}{article}{
author={Odaka, Yuji}
author={Sun, Song}
author={Spotti, Christiano}
title={Compact moduli space of Del Pezzo surfaces and K\"ahler-Einstein metrics}
journal={J. Differential Geom.}
volume={102}
number={1}
date={2016}
pages={127-172}
}

\bib{OSY}{article}{
   author={Ono, Hajime},
   author={Sano, Yuji},
   author={Yotsutani, Naoto},
   title={An example of an asymptotically Chow unstable manifold with
   constant scalar curvature},
   language={English, with English and French summaries},
   journal={Ann. Inst. Fourier (Grenoble)},
   volume={62},
   date={2012},
   number={4},
   pages={1265--1287},
   issn={0373-0956},
   review={\MR{3025743}},
}

\bib{Pic1899}{article}{
	author={Pick, Georg Alexander},
	title={ Geometrisches zur Zahlentheorie.},
	journal={Sitzungber Lotos (Prague)},
	volume={19},
	year={1899},
	pages={311-319},
}

\bib{PS2004}{article}{
   author={Phong, D. H.},
   author={Sturm, Jacob},
   title={Scalar curvature, moment maps, and the Deligne pairing},
   journal={Amer. J. Math.},
   volume={126},
   date={2004},
   number={3},
   pages={693--712},
   issn={0002-9327},
   review={\MR{2058389}},
}

\bib{PS2009}{article}{
   author={Phong, D. H.},
   author={Sturm, Jacob},
   title={Lectures on stability and constant scalar curvature},
   conference={
      title={Handbook of geometric analysis, No. 3},
   },
   book={
      series={Adv. Lect. Math. (ALM)},
      volume={14},
      publisher={Int. Press, Somerville, MA},
   },
   date={2010},
   pages={357--436},
   review={\MR{2743451}},
}

\bib{Pul1979}{article}{
	author={Pullman, Howard W.},
	title={ An Elementary Proof of Pick's Theorem},
	journal={ School Science and Mathematics},
	volume={79},
	issue={1},
	year={1979},
	pages={7-12}}
	
%Ross, Julius; Thomas, Richard A study of the Hilbert-Mumford criterion for the stability of projective varieties. J. Algebraic Geom. 16 (2007), no. 2, 201?255

\bib{RT2007}{article}{
author={Ross, Julius}
author={Thomas, Richard}
title={A study of the Hilbert-Mumford criterion for the stability of projective varieties.}
journal={J. Algebraic Geom.}
volume={16}
number={2}
date={2007}
pages={201-255}
}

\bib{Spotti12}{article}{
   author={Spotti, Cristiano},
   title={Degenerations of K\"ahler-Einstein Fano manifolds},
   review={ arXiv:1211.5334},
   journal={Ph.D. Thesis, Imperial College},
   date={2016},
   number={16},
   pages={132 pages},
}
\bib{SSY16}{article}{
   author={Spotti, Cristiano},
   author={Sun, Song},
   author={Yao, Chengjian},
   title={Existence and deformations of K\"ahler-Einstein metrics on
   smoothable $\Bbb{Q}$-Fano varieties},
   journal={Duke Math. J.},
   volume={165},
   date={2016},
   number={16},
   pages={3043--3083},
   issn={0012-7094},
   review={\MR{3566198}},
}

\bib{Vie95}{book}{
   author={Viehweg, Eckart},
   title={Quasi-projective moduli for polarized manifolds},
   series={Ergebnisse der Mathematik und ihrer Grenzgebiete (3) [Results in
   Mathematics and Related Areas (3)]},
   volume={30},
   publisher={Springer-Verlag, Berlin},
   date={1995},
   pages={viii+320},
   isbn={3-540-59255-5},
   review={\MR{1368632}},
}

\bib{Wang04}{article}{
   author={Wang, Xiaowei},
   title={Moment map, Futaki invariant and stability of projective
   manifolds},
   journal={Comm. Anal. Geom.},
   volume={12},
   date={2004},
   number={5},
   pages={1009--1037},
   issn={1019-8385},
   review={\MR{2103309}},
}

\bib{WX14}{article}{
author={Wang, Xiaowei},
author={Xu, Chenyang}
title={Nonexistence of aymptotic GIT compactification},
journal={ Duke Math. J. },
volume={163},
issue={12},
pages={2217-2241},
year={2014}}

\bib{Yau}{article}{
   author={Yau, Shing Tung},
   title={On the Ricci curvature of a compact K\"ahler manifold and the
   complex Monge-Amp\`ere equation. I},
   journal={Comm. Pure Appl. Math.},
   volume={31},
   date={1978},
   number={3},
   pages={339--411},
   issn={0010-3640},
   review={\MR{480350}},
}

\bib{Yau93}{article}{
   author={Yau, Shing-Tung},
   title={Open problems in geometry},
   conference={
      title={Differential geometry: partial differential equations on
      manifolds},
      address={Los Angeles, CA},
      date={1990},
   },
   book={
      series={Proc. Sympos. Pure Math.},
      volume={54},
      publisher={Amer. Math. Soc., Providence, RI},
   },
   date={1993},
   pages={1--28},
   review={\MR{1216573}},
}
\bib{Zha96}{article}{
author={Zhang, Shouwu},
title={Heights and reductions of semi-stable varieties},
journal={Compos. Math. },
volume={104},
pages={77-105},
year={1996},
}
\end{biblist}
\end{bibdiv}

\end{document}